\tikzset{elegant/.style={smooth,thick,samples=50,cyan}}
\DeclareMathOperator{\supp}{supp}
\newcommand{\nc}{\newcommand}
\nc{\pr}{\partial}
\nc{\bR}{\mathbb{R}}
\nc{\bN}{\mathbb{N}}
\nc{\dx}{\frac{d}{dx}}
\nc{\dxx}{\frac{d^2}{dx^2}}
\nc{\prx}{\frac{\pr}{\pr x}}
\nc{\prt}{\pr_t}
\nc{\prxx}{\frac{\pr^2}{\pr x^2}}
\nc{\prtt}{\pr^2_t}
\nc{\ga}{\gamma}
\nc{\Hg}{{\dot{H}^{-\ga}}}
\nc{\eps}{\epsilon}
\nc{\lag}{\langle}
\nc{\rag}{\rangle}
\nc{\la}{\lambda}
\nc{\what}{\widehat}
\nc{\les}{\lesssim}
\nc{\gtr}{\gtrsim}
\nc{\cN}{\mathcal{N}}
\nc{\cM}{\mathcal{M}}
\nc{\al}{\alpha}
\nc{\nn}{\nonumber}
\nc{\cnl}{\chi_{|\nabla|<\eps}}
\nc{\cng}{\chi_{|\nabla|\ge\eps}}
\nc{\cxil}{\chi_{|\xi|<\eps}}
\nc{\cxig}{\chi_{|\xi|\ge\eps}}
\nc{\exi}{\exists}
\nc{\tr}{\textrm}
\nc{\tref}[1]{\textnormal{\ref{#1}}}
\nc{\coq}{\coloneqq}
\begin{document}

	\title{On semilinear damped wave equations with initial data in homogeneous Sobolev spaces}
	\author{Mitsuhiro MATSUNAGA}
	\maketitle
	\newtheorem{thm}{Theorem}
	\newtheorem{lem}{Lemma}
	\theoremstyle{remark}
	\newtheorem{rem}{Remark}
	\begin{abstract}
		  In this paper, we study semilinear damped equations $u_{tt}+u_t-\Delta u=|u|^p$ with the initial data in $(\Hg\cap H^s)\times(\Hg\cap L^2)$ with the dimension $n\le 6$. Chen-Reissig \cite{chenreissig2023} studied the case $0<\ga\le\min\{n/2, (-n+\sqrt{n^2+16n})/4\}$ and showed that the exponent $p_{\mathrm{crit}}=1+4/(n+2\ga)$ of $p$ distinguishes  the time global existence and the blow-up of solution. In this paper, we discuss the case $\ga\ge\min\{n/2, (-n+\sqrt{n^2+16n})/4\}$ and show that the critical exponent is not $1+4/(n+2\ga)$ but
		  \begin{align}
		  \begin{cases}
		  1+2/n & \text{if }n=1\text{ or }2\\
		  \frac{n+\sqrt{n^2+16n}}{2n} & \text{if }3\le n\le 6.
		  \end{cases}
		  \end{align}
	\end{abstract}
	\section{Introduction}
	In this paper, we study the following Cauchy problem for the semilinear damped wave equation:
	\begin{align}
		\begin{cases}
			\prtt u+\prt u-\Delta u=|u|^p&x\in\bR^n, t>0,\\
			\left(u(0,x),\prt u(t,x)|_{t=0}\right)=(\eps u_0,\eps u_1)&x\in\bR^n,
		\end{cases}\label{bdw}
	\end{align}
	with $p>1$ and $0<\eps\ll1$. In particular, we study the case when $(u_0,u_1)\in(\Hg\cap H^s)\times(\Hg\cap L^2), \ga>0$ and $s\in(0,1]$. Here,
	\begin{align}
		\|f\|_{\Hg}:=\left\||\xi|^{-\ga}\hat{f}\right\|_{L^2}
	\end{align}
	and
	\begin{align}
		\Hg:=\left\{f\ \middle|\ \|f\|_{\Hg}<\infty\right\}.
	\end{align}
	Our main purpose is to determine the critical exponent $p_\mathrm{c}$ of the power $p$ in \eqref{bdw} which distinguishes the global existence of solutions and their blow-up, and the lifespan $T$ when blow-up.

With initial data $(u_0,u_1)\in (L^1\cap H^s)\times (L^1\cap L^2)$, many papers (for example, \cite{Hosono2004,ikeda2015note,matsumura1976,narazaki2004,nishihara2003,ono2003,todorova2001critical}) show that the critical exponent of \eqref{bdw} is the Fujita exponent $p_{\mathrm{F}} := 1+2/n$, which is also the critical exponent of the semilinear heat equation (see \cite{fujita66}).
 
 However under different class of initial data, the Cauchy problem  \eqref{bdw} admits a different critical exponent. For example, when the initial data satisfies $(u_0,u_1) \in(L^m\cap H^s)\times(L^m\cap L^2)$ with $0<s\le1$ and $(-n+\sqrt{n^2+16n})/4<m<2$, the critical exponent is 
 \begin{align}
 1+2m/n\label{pcm}
 \end{align}
 as demonstrated in  \cite{IkedaInuiOkamotoWkasugi2019,IkehataOhta2002}. 
	
	Recently, the critical exponent of the Cauchy problem \eqref{bdw} with initial data $(u_0,u_1)\in(\Hg\cap H^s)\times(\Hg\cap L^2)$ has been studied in light of the Hardy-Littlewood-Sobolev inequality: $L^m\subset\Hg$ for $1<m\le2$, $0\le \ga<n/2$ and $1/m=1/2+\ga/n$. 
	In the case $1\le n\le6$, Chen and Reissig \cite{chenreissig2023} established that for $(u_0,u_1)\in(\Hg\cap H^s)\times(\Hg\cap L^2)$ with $0\le\ga<\min\{n/2,\tilde{\ga}\}$ here $\tilde{\ga}:=\frac{-n+\sqrt{n^2+16n}}{4}$, the critical exponent for the Cauchy problem \eqref{bdw} is given by
\begin{align}
p_{\mathrm{crit}}:=1+\frac{4}{n+2\ga}\label{pcga}
\end{align}
(See Figure~\ref{imgg}). Then, we remark that $p_{\mathrm{crit}}$ in \eqref{pcga} with $1/m=1/2+\ga/n$, which appears as the condition of Hardy-Littlewood-Sobolev inequality, coinsides with the index in \eqref{pcm}. Moreover,  D'Abbicco \cite{dabbicco2024} proved  the existence of global solutions in the critical case.
 Furthermore, \cite{chenreissig2023} derived sharp lifespan estimates for blow-up weak solutions: if $p<p_{\mathrm{crit}}$, then the lifespan satisfies 
 \begin{align}
 T\les \eps^{-1/(p'-1-n/4-\ga/2)}=\eps^{-1/(p'-p'_\mathrm{crit})}.
 \end{align}
 In particular, if $p>1+2\ga/n$ in addition to $p<p_{\mathrm{crit}}$, then the estimate is sharp 
 \begin{align}
 T\simeq \eps^{-1/(p'-1-n/4-\ga/2)}=\eps^{-1/(p'-p'_\mathrm{crit})}.
\end{align}
	

   		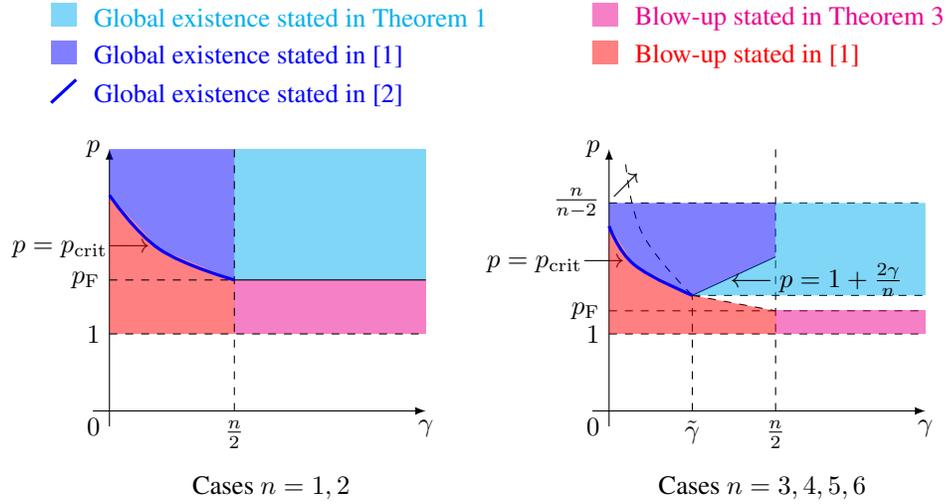
\begin{figure}[http]
			\centering
			\begin{tikzpicture}[>=latex,xscale=1.35,yscale=1.25,scale=0.82]
			\node[right] at (-0.3,4.1) {{\color{blue} Global existence stated in \cite{dabbicco2024}}};
				\node[right] at (-0.3,4.6) {{\color{blue} Global existence stated in \cite{chenreissig2023}}};
				\node[right] at (6.2,4.6) {{\color{red} Blow-up stated in \cite{chenreissig2023}}};
				\node[right] at (-0.3,5.1) {{\color{cyan} Global existence stated in Theorem~\ref{ex}}};
				\node[right] at (6.2,5.1) {{\color{magenta} Blow-up stated in Theorem~\ref{b-u2}}};
				\node[right] at (6.2,4.1) {{\color{brown} Blow-up stated in Theorem~\ref{b-u3}}};
				
				\draw[color=blue,very thick] plot[smooth, tension=.7] coordinates {(-0.7,4)(-0.4,4.3)};
				\fill[opacity = 0.5, color = blue](-0.7,4.5) -- (-0.7, 4.8) -- (-0.4,4.8) -- (-0.4, 4.5);
				\fill[opacity = 0.5, color = cyan](-0.7,5) -- (-0.7, 5.3) -- (-0.4,5.3) -- (-0.4, 5);
				\fill[opacity = 0.5, color = red] (5.8,4.5) -- (5.8,4.8) -- (6.1,4.8) -- (6.1, 4.5);
				\fill[opacity = 0.5, color = magenta] (5.8,5) -- (5.8,5.3) -- (6.1,5.3) -- (6.1, 5);
				\fill[opacity = 0.5, color = brown] (5.8,4) -- (5.8,4.3) -- (6.1,4.3) -- (6.1, 4);
				\draw[->] (-0.2,0) -- (3.8,0) node[below] {$\gamma$};
				\draw[->] (0,-0.2) -- (0,3.4) node[left] {$p$};		
				\node[left, color=black] at (0.6,2.1) {{ $p=p_{\mathrm{crit}}$$\longrightarrow$}};
				\node[left] at (0,-0.2) {{$0$}};
				\fill[opacity = 0.5, color = red]  (0,2.8) -- (0.6, 2.1) -- (1.5,1.7) -- (1.5, 1) -- (0,1);
				\fill[opacity = 0.5, color = magenta]  (1.5,1.7) -- (3.8, 1.7) -- (3.8,1) -- (1.5, 1);
				\fill[opacity = 0.5, color = blue]  (0,2.8) -- (0.6, 2.1) -- (1.5,1.7) -- (1.5,3.4) -- (0, 3.4);
				\fill[opacity = 0.5, color = cyan] (1.5,3.4) -- (3.8,3.4) -- (3.8,1.7) -- (1.5, 1.7);
				\node[below] at (1.5,0) {{$n/2$}};
				\node[left] at (0,1) {{$1$}};
				\node[left] at (0,1.7) {{$p_{\mathrm{F}}$}};
				\draw[dashed, color=black]  (0, 1)--(3.8, 1);
				\draw[dashed, color=black]  (0, 1.7)--(1.5, 1.7);
				\draw[dashed, color=black]  (1.5, 0)--(1.5, 3.4);
				\draw (1.5,1.7) -- (3.8,1.7);
				\draw[->] (5.8,0) -- (9.8,0) node[below] {$\gamma$};
				\draw[->] (6,-0.2) -- (6,3.4) node[left] {$p$};
				\node[left] at (6,-0.2) {{$0$}};
				\node[left] at (6,2.7) {{$\frac{n}{n-2}$}};
				\node[below] at (7,0) {{$\tilde{\ga}$}};
				\node[below] at (8,0) {{$n/2$}};
				\node[left] at (6,1) {{$1$}};
				\node[right] at (9.8,1.5) {{$p=\frac{n+\sqrt{n^2+16n}}{2n}$}};
				
				\node[left, color=black] at (6.3,1.9) {{ $p=p_{\mathrm{crit}}$$\longrightarrow$}};
				\draw[dashed, color=black]  (6, 1)--(9.8, 1);
				\draw[dashed, color=black]  (8, 0)--(8, 3.4);
				\draw[dashed, color=black]  (6, 2.7)--(9.8, 2.7);
				\draw[dashed, color=black]  (7, 0)--(7, 1.5);
				\draw[dashed, color=black] plot[smooth, tension=.7] coordinates {(6.2,3.2) (6.4,2.3) (7,1.5)};
				\node[below] at (6.2,3.2) {{$\nearrow$}};
				\draw[color=black] plot[smooth, tension=.7] coordinates { (7,1.5) (8,2)};
				\draw[dashed, color=black] plot[smooth, tension=.7] coordinates { (7,1.5) (7.5,1.4) (8,1.3)};
				\draw[color=black] (7,1.5) -- (9.8,1.5);
				\fill[opacity = 0.5, color =red]  (6,2.4) -- (6.3,1.9) -- (7,1.5) -- (7.5,1.4) --(8,1.3) -- (8,1) -- (6, 1);
				\fill[opacity = 0.5, color =blue]  (6,2.4) -- (6.3,1.9) -- (7,1.5) -- (8,2) -- (8,2.7) -- (6,2.7);
				\fill[opacity = 0.5, color =cyan] (7,1.5) -- (8,2) -- (8,2.7) -- (9.8,2.7) -- (9.8,1.5);
				\fill[opacity = 0.5, color =brown]  (8,1) -- (8,1.3) -- (7,1.5) -- (9.8,1.5) -- (9.8,1);
				\node[left] at (3,-1) {{Cases $n=1,2$}};
				\node[left] at (9.2,-1) {{Cases $n=3,4,5,6$}};
				\node[left, color=black] at (9.64,1.7) {{$\longleftarrow$ $p=1+\frac{2\gamma}{n}$}};
				\draw[color=blue, very thick] plot[smooth, tension=.7] coordinates {(0,2.8) (0.6,2.1) (1.5,1.7)};
				\draw[color=blue,very thick] plot[smooth, tension=.7] coordinates {(6,2.4) (6.3,1.9) (7,1.5)};
			\end{tikzpicture}
			\caption{Description of the critical exponent in the $\gamma-p$ plane (with $s=1$)}
			\label{imgg}
		\end{figure}
		In this paper, we studied the Cauchy problem \eqref{bdw} when the initial data in $(\Hg\cap H^s)\times(\Hg\cap L^2)$ under conditions that could not be handled in \cite{chenreissig2023}, that is, the case $\ga\ge \min\{n/2,\tilde{\ga}\}$, and we obtain new critical exponents not \eqref{pcga} but 
		\begin{align}
		\begin{cases}
		1+2/n&\text{ if }n=1\text{ or }2,\\
		\frac{n+\sqrt{n^2+16n}}{2n}&\text{ if }3\le n\le 6.
		\end{cases}
		\end{align}
		 Theorem~\ref{ex} in Section \ref{secmainglo} is a result concerning global existence. Theorem~\ref{b-u2} in Section \ref{secmainblow} states that for any $\gamma>0$ solutions blow-up in finite time if $p$ is below the Fujita exponent. These two results is illustrated in Figure~\ref{imgg}. We remark that the critical exponent coincides with the Fujita exponent in particular for $n=1,2$.

\ \\

\noindent\textbf{Notation:}

We list the notations used throughout this paper. Unless otherwise stated, the following conventions apply.
\begin{itemize}
  \item Constants denote positive real numbers and may change from line to line; $C$ and $C'$ denote a positive constants whose value may vary with context.
  \item The notation $X\les Y$ means there exists $C>0$ such that $X\le C\,Y$. The notation $X\simeq Y$ means $X\les Y$ and $Y\les X$.
  \item For $f\in\mathcal{S}'(\mathbb{R}^n)$, $\mathcal{F}f$ or $\hat f$ denotes its Fourier transform, and $\mathcal{F}^{-1}f$ or $\check f$ denotes the inverse transform.
  \item For $r>0$, set $B(r):=\{x\in\mathbb{R}^n:\,|x|<r\}$. The symbol $\chi_{|\cdot|<r}$ denotes the characteristic function of $B(r)$.
  \item For $p>1$, the conjugate exponent $p'>1$ is defined by $1/p+1/p'=1$.


  \item For $1\le p\le\infty$, the Lebesgue space $L^p(\mathbb{R}^n)$ is defined as usual:

\[
    L^p(\mathbb{R}^n):=\{f \mid \|f\|_{L^p}<\infty\},
  \]

  where for $1\le p<\infty$

\[
    \|f\|_{L^p}:=\Big(\int_{\mathbb{R}^n}|f(x)|^p\,dx\Big)^{1/p},
  \]

  and for $p=\infty$

\[
    \|f\|_{L^\infty}:=\operatorname{ess\,sup}_{x\in\mathbb{R}^n}|f(x)|.
  \]

  \item For $s\in\mathbb{R}$, the (inhomogeneous) Sobolev space $H^s(\mathbb{R}^n)$ is defined by

\[
    H^s(\mathbb{R}^n):=\Big\{f\in\mathcal{S}'(\mathbb{R}^n)\ \Big|\ \|f\|_{H^s}:=\Big(\int_{\mathbb{R}^n}(1+|\xi|^2)^s|\widehat{f}(\xi)|^2\,d\xi\Big)^{1/2}<\infty\Big\},
  \]

  and $H^s(\mathbb{R}^n)$ is a Hilbert space with inner product

\[
    \langle f,g\rangle_{H^s}:=\int_{\mathbb{R}^n}(1+|\xi|^2)^s\widehat{f}(\xi)\overline{\widehat{g}(\xi)}\,d\xi.
  \]

 \item Unless otherwise stated, all function spaces in this paper are taken on the whole space $\mathbb{R}^n$; accordingly $L^p$ and $H^s$ denote $L^p(\mathbb{R}^n)$ and $H^s(\mathbb{R}^n)$ respectively.
\end{itemize}

	\section{Main results of global existence}\label{secmainglo}
	
	This section presents a theorem obtained regarding the existence of global solutions.
	
	We say that a function $u$ is a mild solution of \eqref{bdw} if $u$ satisfies 
	\begin{align}
		u=K'(t)*u_0+K(t)*(u_1+u_0)+\int_0^tK(t-\tau)*|u|^p(\tau) d\tau,
	\end{align}
	where the function $K$ is the fundamental solution of linear dumped wave equation written as
	\begin{align}
		\hat{K}=e^{-\frac{t}{2}}\sum_{k=0}^\infty \frac{\left(\frac{1}{4}-|\xi|^2\right)^k}{(2k+1)!}t^{2k+1}=
		\begin{cases}
			e^{-\frac{t}{2}}\frac{\sinh\left(t\sqrt{\frac{1}{4}-|\xi|^2}\right)}{\sqrt{\frac{1}{4}-|\xi|^2}}&|\xi|<1/2\\
			e^{-\frac{t}{2}}&|\xi|=1/2\\
			e^{-\frac{t}{2}}\frac{\sin\left(t\sqrt{|\xi|^2-\frac{1}{4}}\right)}{\sqrt{|\xi|^2-\frac{1}{4}}}&|\xi|>1/2
		\end{cases}.
	\end{align}
	\begin{thm}\label{ex}
		Let $1\le n\le6$ and let $s\in(0,1]$ suppose that 
		\begin{align}
			\ga\ge \min\left\{n/2, \frac{\sqrt{n^2+16n}-n}{4}\right\}=
			\begin{cases}
				n/2&n=1\text{ or }2\\
				\frac{\sqrt{n^2+16n}-n}{4}&n\ge3
			\end{cases},\label{ga}
		\end{align} 
		\begin{align}
			p>\max\left\{1+\frac{2}{n},\frac{\sqrt{n^2+16n}+n}{2n}\right\}=
			\begin{cases}
				1+2/n & n=1\text{ or }2\\
				\frac{\sqrt{n^2+16n}+n}{2n}& n\ge3
			\end{cases},\label{pga}
		\end{align}
		and $p\le\frac{n}{n-2s}$ if $n>2s$. If
		\begin{align}
			(u_0,u_1)\in(\Hg\cap H^s)\times(\Hg\cap L^2),
		\end{align}then, for a sufficiently small $\eps$, there is a unique mild solution
		\begin{align}
			u\in C([0,\infty):H^{s})
		\end{align}
		to the Cauchy problem for the semilinear damped wave equation \eqref{bdw}. Furthermore, for the number $\tilde{\ga}>0$ satisfing 
		\begin{align}
			\tilde{\ga}<n/2\label{ga'1}
		\end{align} and 
		\begin{align}
			\tilde{\ga}\le\min\left\{\frac{n(p-1)}{2}, \ga\right\},\label{ga'2}
		\end{align}
		the solution $u$ satisfy the following decay estimates:
		\begin{align}
			\|u(t,\cdot)\|_{L^2}&\les
			\eps(1+t)^{-\frac{\tilde{\ga}}{2}}\left(\|(u_0,u_1)\|_{(\Hg\cap H^s)\times(\Hg\cap L^2)}\right)\\
			\|u(t,\cdot)\|_{H^s}&\les
			\eps(1+t)^{-\frac{s+\tilde{\ga}}{2}}\left(\|(u_0,u_1)\|_{(\Hg\cap H^s)\times(\Hg\cap L^2)}\right).
		\end{align}
	\end{thm}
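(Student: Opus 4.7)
The proof will be by a contraction mapping argument in a weighted space encoding the decay rates claimed in the theorem. Define
\[
X:=\Big\{u\in C([0,\infty);H^s):\|u\|_X<\infty\Big\},\quad \|u\|_X:=\sup_{t\ge0}\Big[(1+t)^{\tilde\gamma/2}\|u(t)\|_{L^2}+(1+t)^{(s+\tilde\gamma)/2}\|u(t)\|_{H^s}\Big],
\]
and let $\Phi u$ denote the Duhamel right-hand side from the mild-solution definition. The goal is to show that for $\epsilon$ small, $\Phi$ maps the ball $B_{C\epsilon}\subset X$ to itself and is a contraction there; the decay estimates then follow from $\|u\|_X\lesssim\epsilon$, while continuity in $H^s$ is a standard consequence of the Banach fixed-point construction.

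The linear piece of $\Phi u$ is handled via the Fourier multipliers $\hat K,\hat K'$. Splitting frequencies at $|\xi|=1/2$, the explicit formula for $\hat K$ gives $|\hat K(t,\xi)|\lesssim e^{-c|\xi|^2 t}$ (and the same for $|\hat K'|$) on the low-frequency regime, and exponential decay $e^{-t/2}$ (times $|\xi|^{-1}$) on the high-frequency side. Combined with the elementary bound $\sup_{|\xi|<1/2}e^{-c|\xi|^2 t}|\xi|^{\alpha}\lesssim(1+t)^{-\alpha/2}$ for $\alpha>0$, applied with $\alpha=\tilde\gamma$ and $\alpha=s+\tilde\gamma$, and the inclusion $\|f\|_{\dot H^{-\tilde\gamma}}\lesssim\|f\|_{\Hg}+\|f\|_{L^2}$ (valid for $0\le\tilde\gamma\le\gamma$, as in \eqref{ga'2}), these Fourier bounds yield
\[
\|K(t)*f\|_{L^2}\lesssim(1+t)^{-\tilde\gamma/2}\|f\|_{\Hg\cap L^2},\quad \|K(t)*f\|_{H^s}\lesssim(1+t)^{-(s+\tilde\gamma)/2}\|f\|_{\Hg\cap H^s},
\]
together with analogous bounds for $K'*u_0$, which reproduce the required $X$-norm at order $\epsilon$.

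For the Duhamel nonlinear term $\int_0^t K(t-\tau)*|u|^p d\tau$ I would split the time integral at $t/2$. On $[0,t/2]$ use the low-frequency $L^2$-decay of $K(t-\tau)$ acting on $|u|^p$ measured in $\dot H^{-\tilde\gamma}$. Since $\tilde\gamma<n/2$ by \eqref{ga'1}, HLS gives $\||u|^p\|_{\dot H^{-\tilde\gamma}}\lesssim\|u\|_{L^{mp}}^p$ with $1/m=1/2+\tilde\gamma/n$; Gagliardo-Nirenberg then yields $\|u\|_{L^{mp}}\lesssim\|u\|_{L^2}^{1-\theta}\|u\|_{H^s}^\theta$ with $\theta=(n/s)(1/2-1/(mp))$, the constraint $\theta\in[0,1]$ being precisely the upper bound $p\le n/(n-2s)$ for $n>2s$ in the hypotheses. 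A direct calculation gives $p\theta s=n(p-1)/2-\tilde\gamma$, so substituting the $X$-norm controls yields
\[
\||u|^p(\tau)\|_{\dot H^{-\tilde\gamma}}\lesssim\|u\|_X^p(1+\tau)^{-(p-1)(n+2\tilde\gamma)/4}.
\]
The $[0,t/2]$-contribution to $\|\Phi u(t)\|_{L^2}$ is therefore $\lesssim t^{-\tilde\gamma/2}\|u\|_X^p\int_0^{t/2}(1+\tau)^{-(p-1)(n+2\tilde\gamma)/4}d\tau$, which is bounded by $(1+t)^{-\tilde\gamma/2}\|u\|_X^p$ as soon as $(p-1)(n+2\tilde\gamma)>4$. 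Saturating $\tilde\gamma=n(p-1)/2$ (the largest allowed by \eqref{ga'2}) turns this into $np(p-1)>4$, whose solution is $p>(n+\sqrt{n^2+16n})/(2n)$, the non-trivial lower bound in \eqref{pga}. On $[t/2,t]$, where $\tau\simeq t$, use the trivial $L^2$-boundedness $\|K(t-\tau)*g\|_{L^2}\lesssim\|g\|_{L^2}$ together with Sobolev-Gagliardo-Nirenberg $\|u\|_{L^{2p}}\lesssim\|u\|_{L^2}^{1-\theta'}\|u\|_{H^s}^{\theta'}$ (again requiring $p\le n/(n-2s)$); substituting the $X$-norm delivers the same target $(1+t)^{-\tilde\gamma/2}$. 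The $H^s$ estimate is obtained identically with $|\xi|^s\hat K$ in place of $\hat K$, exploiting the additional low-frequency decay $t^{-s/2}$ produced by the $|\xi|^s$ factor. The contraction property comes from the pointwise bound $||u|^p-|v|^p|\lesssim(|u|+|v|)^{p-1}|u-v|$ together with H\"older, reducing the difference estimate to the same calculation.

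The main obstacle is this exponent bookkeeping: matching the HLS exponent $m$, the Gagliardo-Nirenberg exponent $\theta$, and the time weights in $X$ so that the two time integrals return exactly $(1+t)^{-\tilde\gamma/2}$ and its $H^s$ analogue. Hypothesis \eqref{ga} is exactly what makes the optimal $\tilde\gamma=n(p-1)/2$ compatible with both $\tilde\gamma\le\gamma$ and $\tilde\gamma<n/2$, and \eqref{pga} is exactly what makes the resulting time integrals converge with the correct decay rate.
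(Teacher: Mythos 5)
Your proposal is correct in outline but takes a genuinely different route from the paper. The paper's entire proof is a two-step reduction: it records the embedding $H^s\cap\dot H^{-\ga}\subset H^s\cap\dot H^{-\tilde{\ga}}$ for $\tilde{\ga}\le\ga$ (Lemma~\ref{monotonic}), checks that an admissible $\tilde{\ga}<n/2$ satisfies the hypotheses of Chen--Reissig's global existence theorem (quoted as Lemma~\ref{crt1}), and invokes that theorem with $\ga$ replaced by $\tilde{\ga}$. You have the same key observation --- pass from $\ga\ge n/2$ to a surrogate $\tilde{\ga}<n/2$ via exactly that embedding, since $\||\xi|^{-\tilde{\ga}}\hat f\|_{L^2}\les\|f\|_{\Hg}+\|f\|_{L^2}$ --- but instead of citing the known result you reconstruct its proof: the weighted contraction space, the low/high-frequency multiplier bounds for $\hat K$, the splitting of the Duhamel integral at $t/2$, and the Hardy--Littlewood--Sobolev plus Gagliardo--Nirenberg treatment of $\||u|^p\|_{\dot H^{-\tilde{\ga}}}$. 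Your exponent bookkeeping ($p\theta s=n(p-1)/2-\tilde{\ga}$, the convergence condition $(p-1)(n+2\tilde{\ga})>4$, which is exactly the hypothesis $p>1+\frac{4}{n+2\tilde{\ga}}$ of Lemma~\ref{crt1}) is accurate and matches what the citation supplies. The paper's route buys brevity and offloads all harmonic-analysis risk onto the reference; yours buys a self-contained argument that makes visible why \eqref{ga} and \eqref{pga} are precisely the conditions under which a workable $\tilde{\ga}$ exists.

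One slip to repair: you cannot literally ``saturate $\tilde{\ga}=n(p-1)/2$''. For $n=1,2$ the hypothesis $p>1+\frac{2}{n}$ forces $n(p-1)/2>n/2$, so that value violates \eqref{ga'1}; for $n\ge3$ it may exceed $\ga$ and violate \eqref{ga'2}. The correct move --- the one hidden in the paper's ``we may assume without loss of generality that $2p'-2-\frac{n}{2}<\tilde{\ga}$'' --- is to choose $\tilde{\ga}$ close enough to $\min\{n(p-1)/2,\ga,n/2\}$ that $(p-1)(n+2\tilde{\ga})>4$ holds (conditions \eqref{ga} and \eqref{pga} guarantee such a choice exists), and then to note that the stated decay for every smaller admissible $\tilde{\ga}$, for which your time integrals need not converge, follows a fortiori because the rate $(1+t)^{-\tilde{\ga}/2}$ only weakens as $\tilde{\ga}$ decreases.
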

	
		The proof of this theorem utilizes the global existence result established in \cite{chenreissig2023}, together with the fact that, for $0<\ga<\tilde{\ga}$ and $s\ge0$, one has $\dot{H}^{-\tilde{\ga}}\cap H^s\subset \Hg\cap H^s$~. The specific method will be described in Section \ref{secproglo}. 

	\section{Main results of blow-up solution}\label{secmainblow}
	In this section, we establish three blow-up theorems. They are distinguished by the assumptions imposed on $p$ and the initial data, and they provide different lifespan estimates as a result.
	\begin{thm}\label{b-u1}
		Let $n\in\bN, \ga>0$ and $1<p<1+\frac{4}{n+2\ga}$. Assume that the initial data $(u_0,u_1)\in(H^s\cap \Hg)\times(L^2\cap\Hg)$ satisfy $\what{u_0+u_1}\ge0$ and
		\begin{align}
			\what{u_0+u_1}(\xi)\gtr |\xi|^{-n/2+\ga}(\log(|\xi|))^{-1}\chi_{|\cdot|<r}(\xi),\label{inHg}
		\end{align}
		for some $r>0$.
		Then, the Cauchy problem \eqref{bdw} has no global (in time) weak solution. 
		Moreover, the lifespan of the local solution $T$ satisfies
		\begin{align}
			T\les\eps^{-\frac{1}{p'-1-\frac{\ga}{2}-\frac{n}{4}-\delta}}\label{lifespan1}
		\end{align} 
		for any $0<\delta\ll1$.
	\end{thm}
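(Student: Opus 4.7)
The plan is to apply the test-function (\`a la Mitidieri--Pohozaev) method, exploiting the Fourier-side lower bound \eqref{inHg} on the initial data through Plancherel's identity. I would take a smooth cutoff $\eta\in C^\infty([0,\infty))$ with $\eta\equiv 1$ on $[0,1/2]$, $\eta\equiv 0$ on $[1,\infty)$, and $\eta'(0)=0$, together with a Gaussian spatial profile $\psi(x)=c_n R^n e^{-|x|^2R^2/4}$ so that both $\psi\ge 0$ and $\hat\psi(\xi)=e^{-|\xi|^2/R^2}\ge 0$, with the scale $R>0$ to be optimized. With a fixed exponent $q>2p'$, the test function is $\Phi(t,x)=\eta(t/T)^q\psi(x)$, satisfying $\Phi(0,\cdot)=\psi$, $\Phi_t(0,\cdot)=0$, and $\Phi,\Phi_t$ vanishing at $t=T$. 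Testing the weak form of \eqref{bdw} against $\Phi$ and integrating by parts yields
\begin{align}
\int_0^T\!\!\int|u|^p\Phi\,dx\,dt+\eps\int(u_0+u_1)\psi\,dx=\int_0^T\!\!\int u\,(\Phi_{tt}-\Phi_t-\Delta\Phi)\,dx\,dt.
\end{align}
H\"older's inequality with exponents $(p,p')$ against the measure $\Phi\,dx\,dt$, followed by Young to absorb $\int|u|^p\Phi$, reduces matters to
\begin{align}
\eps\int(u_0+u_1)\psi\,dx\les J,\qquad J:=\int_0^T\!\!\int \Phi^{-p'/p}|\Phi_{tt}-\Phi_t-\Delta\Phi|^{p'}\,dx\,dt.
\end{align}

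For the left-hand side, Plancherel together with $\hat\psi\ge 0$ and \eqref{inHg} gives
\begin{align}
\int(u_0+u_1)\psi\,dx=\int\widehat{u_0+u_1}(\xi)\,\overline{\hat\psi(\xi)}\,d\xi\gtr\int_0^{R}\rho^{n/2+\ga-1}|\log\rho|^{-1}\,d\rho\simeq R^{n/2+\ga}|\log R|^{-1}
\end{align}
for $R<r/2$, where the last asymptotic follows by a routine integration-by-parts estimate. For $J$, Gaussian scaling produces $\int\psi\,dx\simeq 1$ and $\int|\Delta\psi|^{p'}\psi^{-p'/p}\,dx\simeq R^{2p'}$, and separating the three contributions of $\Phi_{tt}$, $\Phi_t$ and $\Delta\Phi$ gives
\begin{align}
J\les T^{1-2p'}+T^{1-p'}+T R^{2p'},
\end{align}
with the $T^{1-2p'}$ piece negligible for large $T$.

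Optimizing with the parabolic scaling $R=T^{-1/2}$ equates the two remaining pieces at $T^{1-p'}$ and yields
\begin{align}
\eps\, T^{-(n+2\ga)/4}|\log T|^{-1}\les T^{1-p'},\qquad\text{equivalently}\qquad \eps\,T^{\alpha}\les |\log T|,
\end{align}
with $\alpha:=p'-1-\ga/2-n/4>0$ under the subcritical assumption $p<1+4/(n+2\ga)$. Since $T^{\alpha}/\log T\to\infty$, this forces $T<\infty$ and delivers the quantitative bound $T\les\eps^{-1/\alpha}$ (the polylogarithmic correction is absorbable in the $\les$), giving both the nonexistence of a global weak solution and \eqref{lifespan1}. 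The main technical point is securing $\psi,\hat\psi\ge 0$ simultaneously, so that $\Phi$ remains a valid test function while Plancherel still delivers a signed low-frequency lower bound; the Gaussian choice resolves this cleanly, and the remaining work is bookkeeping of the weighted test-function integrals in the parabolic scaling.
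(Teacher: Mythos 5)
Your proposal is essentially the paper's proof: the Mitidieri--Pohozaev test-function method with parabolic scaling $R\simeq T^{-1/2}$, where the key point --- converting the Fourier-side hypothesis \eqref{inHg} into the lower bound $\int(u_0+u_1)\psi\,dx\gtrsim R^{n/2+\ga}|\log R|^{-1}$ via Plancherel --- requires a spatial profile that is nonnegative \emph{and} has nonnegative Fourier transform, exactly as in the paper (which uses a self-convolution $\tilde\phi*\tilde\phi$ of a radial bump, raised to a power $l>2p'$, instead of your Gaussian). The only caveats are that your Gaussian is not compactly supported, so testing a weak solution against it requires either an enlarged test-function class or an approximation argument that the compactly supported choice avoids, and that both your argument and the paper's gloss over the residual $\log T$ factor in the final inequality $\eps T^{\alpha}\lesssim\log T$, which strictly yields $T\lesssim(\eps^{-1}\log\eps^{-1})^{1/\alpha}$ rather than $T\lesssim\eps^{-1/\alpha}$ unless handled more carefully.
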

	\begin{rem}
		If the initial data satisfy $\what{u_0}=\what{u_1}=|\xi|^{-n/2+\ga}(\log(|\xi|))^{-1}\chi_{|\cdot|<r}(\xi)$, then $(u_0,u_1)\in(\Hg\cap H^s)\times(\Hg\cap L^2)$.
	\end{rem}
	\begin{rem}
		The value $-1/(p'-\ga/2-n/4)$ that appears in the lifespan estimate \eqref{lifespan1} can also be written as$-1/(p'-p'_{crit})$.
	\end{rem}
	\begin{thm}\label{b-u2}
		Let $n\in\bN, 1<p<1+\frac{2}{n}$ and $0<\eps\ll1$. Assume that the initial data $(u_0,u_1)\in(H^s\cap \Hg)\times(L^2\cap\Hg)$ satisfy $\what{u_0+u_1}\ge0$ and 
		\begin{align}
			\what{u_0+u_1}(\xi)>0\ \text{a.e. }\xi\in B(r),
			\end{align}
			for some $r>0$. Here $B(r):=\{\xi\in\bR^n: |\xi|<r\}$. Then, the Cauchy problem \eqref{bdw} has no global (in time) weak solution.
			Moreover, the lifespan of the local solution $T$ satisfies
		\begin{align}
		T\les\eps^{-\frac{p}{p'-1-n/2}}.
		\end{align}
	\end{thm}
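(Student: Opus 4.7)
The plan is to apply the test-function method. For $\phi\in C^{2}$ with $\phi(T,\cdot)=\phi_{t}(T,\cdot)=\phi_{t}(0,\cdot)=0$, a weak solution of \eqref{bdw} satisfies the integral identity
\begin{align}
\int_{0}^{T}\!\!\int_{\bR^{n}}|u|^{p}\phi\,dx\,dt+\eps\int_{\bR^{n}}(u_{0}+u_{1})\,\phi(0,\cdot)\,dx=\int_{0}^{T}\!\!\int_{\bR^{n}}u\,(\phi_{tt}-\phi_{t}-\Delta\phi)\,dx\,dt.
\end{align}
I would set $\phi(t,x)=\eta(t/T)^{2p'}\Phi_{R}(x)$, where $\eta\in C^{\infty}$ is nonincreasing with $\eta\equiv 1$ on $[0,1/2]$, $\eta\equiv 0$ on $[1,\infty)$, $\eta'(0)=0$, and $\Phi_{R}(x)=\phi_{0}(x/R)^{2}$ for a real, even, Schwartz $\phi_{0}$ with $\what{\phi_{0}}\in C_{c}^{\infty}(B(r/2))$, $\what{\phi_{0}}\ge 0$ and $\what{\phi_{0}}(0)>0$. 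Then $\Phi_{R}\ge 0$, and $\what{\Phi_{R}}\ge 0$ is supported in $B(r/R)\subset B(r)$ and strictly positive on a neighbourhood of the origin.

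\textbf{Positivity and scaling.} By Parseval's identity,
\begin{align}
c(R):=\int_{\bR^{n}}(u_{0}+u_{1})\,\Phi_{R}\,dx=\int_{\bR^{n}}\what{u_{0}+u_{1}}(\xi)\,\what{\Phi_{R}}(\xi)\,d\xi>0
\end{align}
by the hypothesis on $\what{u_{0}+u_{1}}$. Applying Hölder's and Young's inequalities to the right-hand side of the identity yields the necessary condition $\eps c(R)\les J$ for the existence of a weak solution on $[0,T]$, where
\begin{align}
J:=\int_{0}^{T}\!\!\int_{\bR^{n}}\phi^{1-p'}\,|\phi_{tt}-\phi_{t}-\Delta\phi|^{p'}\,dx\,dt.
\end{align}
A direct scaling calculation, exploiting the product form of $\phi$ and the fact that the exponent $2p'$ on $\eta$ makes the powers of $\eta$ cancel in $\phi^{1-p'}|\phi_{t}|^{p'}$ and $\phi^{1-p'}|\Delta\phi|^{p'}$, produces $J\les T^{1-p'}R^{n}+T R^{n-2p'}$; at the heat scale $R=T^{1/2}$ this reduces to $J\les T^{1+n/2-p'}$, which tends to $0$ as $T\to\infty$ thanks to $p<1+\tfrac{2}{n}$.

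\textbf{Conclusion and main obstacle.} If $\what{u_{0}+u_{1}}$ were bounded below by a positive constant on $B(r)$, $c(T^{1/2})$ would stay bounded below and the previous inequality would already give the sharp sub-Fujita lifespan $T\les\eps^{-1/(p'-1-n/2)}$. Under the weaker hypothesis of Theorem~\ref{b-u2}, the change of variables $\eta=R\xi$ only yields $c(R)=\int\what{u_{0}+u_{1}}(\eta/R)\,\what{\Phi}(\eta)\,d\eta>0$, a quantity that may degenerate as $R\to\infty$, and this is precisely the main obstacle. The plan is to compensate by iterating the test-function identity once more, using the weighted $L^{p}$-bound on $u$ furnished by the first inequality as a substitute for the missing quantitative lower bound on $\what{u_{0}+u_{1}}$ near the origin; the degeneracy of $c(R)$ is then absorbed into an extra factor $p$ in the scaling exponent, yielding the stated lifespan $T\les\eps^{-p/(p'-1-n/2)}$.
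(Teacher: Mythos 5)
Your overall two-step strategy is the same as the paper's: one application of the test-function inequality gives a lower bound $\iint|u|^p\phi\,dx\,dt\gtrsim\eps^p$, a second application gives the upper bound $\les T^{1+n/2-p'}$, and combining the two produces the extra factor $p$ in the lifespan exponent. But the step you leave as a one-sentence "plan" is precisely where the work is, and as stated it has a genuine gap: the lower bound you can actually extract from the first inequality lives at a \emph{fixed} scale ($R=1$, where $c(1)>0$ is an honest constant), while the upper bound lives at the scale $R=\sqrt{T}$. You never say how the two are compared. The paper bridges this by arranging that $R\mapsto I(R)=\iint|u|^p\phi_R\eta_R\,dx\,dt$ is non-decreasing, which in turn requires the bump function to satisfy the pointwise monotonicity $\phi(rx)\le\phi(Rx)$ for $0<r<R$ (property (iii) in Section 5.1, obtained by taking $\tilde\phi*\tilde\phi$ for a radial decreasing $\tilde\phi$ and then raising to a power). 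Then $\eps^p\lesssim I(1)\le I(\sqrt{T})\lesssim T^{1+n/2-p'}$ closes the argument. "Iterating the identity once more" does not by itself supply this comparison, and your particular test function $\Phi_R(x)=\phi_0(x/R)^2$ is \emph{not} monotone in $R$ in this sense, so the transfer fails for it.

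There is a second, related problem with your choice of $\Phi_R$. Since $\what{\phi_0}$ is compactly supported, $\phi_0$ is band-limited, hence analytic, not compactly supported, and (in general) vanishing or nearly vanishing on large sets; the weight $\Phi^{1-p'}=\phi_0^{2(1-p')}$ then blows up at the (near-)zeros of $\phi_0$ faster than $|\Delta\Phi|^{p'}$ vanishes, because the exponent $2$ is smaller than the threshold $2p'$ needed to keep $\Phi^{-1/p}\Delta\Phi$ bounded (compare the computation of \eqref{24} in the paper, where $\phi=\tilde\phi^{\,l}$ with $l>2p'$ is chosen exactly so that all exponents $l/p'-1$ and $l/p'-2$ are positive). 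So your $J$ is not finite as written, while you do take the power $2p'$ on the time cutoff where the same issue arises. Note also that the paper does not need $\what{\Phi_R}$ to be supported in $B(r)$: it suffices that $\check{\phi_R}\ge0$ everywhere and $\check{\phi_R}>0$ near the origin, which together with $\what{u_0+u_1}\ge0$ globally and $>0$ a.e.\ on $B(r)$ already gives $c(R)>0$; this lets one keep $\phi$ compactly supported in $x$ and avoid both problems at once.
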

	\begin{rem}
		If the initial data satisfy $u_0=u_1=\Delta^k e^{-\frac{|x|^2}{2}}$ with $k\ge\ga/2$, then $(u_0,u_1)\in(\Hg\cap H^s)\times(\Hg\cap L^2)$ and $(u_0,u_1)$ satisfy the conditions of Theorem~\ref{b-u2}.
	\end{rem}
	\begin{rem}
		The value $-p/(p'-n/2)$ that appears in the lifespan estimate \eqref{lifespan1} can also be written as$-p/(p'-p'_\mathrm{F})$.
	\end{rem}
\begin{thm}\label{b-u3}
		Let $n\in\bN, 1<p$ and $0<\eps\ll1$. Assume that the initial data $(u_0,u_1)\in(H^s\cap \Hg)\times(L^2\cap\Hg)$ satisfy $\what{u_0+u_1}\ge0$,
		\begin{align}
		1+\frac{n}{4}p<p'
		\end{align}i.e.
		\begin{align}
		p>\frac{n+\sqrt{n^2+16n}}{2n}
		\end{align}
		and
		\begin{align}
			\what{u_0+u_1}(\xi+a)\gtr|\xi|^{-n/2}(\log(|\xi|^{-1}))\ \text{a.e. }\xi\in B(r)\label{inHg2}
			\end{align}
			for some $r>0$ and $0\neq a\in\mathbb{R}^n$. Then, the Cauchy problem \eqref{bdw} has no global (in time) weak solution.
			Moreover, the lifespan of the local solution $T$ satisfies
		\begin{align}
		T\les\eps^{-\frac{p}{p'-1-\frac{n}{4}p-\delta}}\label{b-u3l}
		\end{align}
		for $0<\delta\ll1$.
	\end{thm}
	\begin{rem}
		By Theorem~\ref{b-u1}, Theorem~\ref{b-u2} and Theorem~\ref{b-u3}, we get the upper estimate of lifespan as
		\begin{align}
			T\les \min\left\{\eps^{-\frac{p}{p'-p'_{\mathrm{F}}}}, \eps^{-\frac{1}{p'-p'_{\mathrm{crit}}-\delta}}, \eps^{-\frac{p}{p'-1-\frac{n}{4}p-\delta}} \right\}=
			\begin{cases}
				\eps^{-\frac{p}{p'-p'_{\mathrm{F}}}} &p\ge \max \{2,\frac{4+2n}{n+2\ga}\}\\
				\eps^{-\frac{1}{p'-p'_{\mathrm{crit}}-\delta}} & p\le \min\{\frac{4+2n}{n+2\ga}, \frac{2}{\ga}\}\\
				\eps^{-\frac{p}{p'-1-\frac{n}{4}p-\delta}} & \frac{2}{\ga}<p\le2
			\end{cases}
		\end{align}
		for $1<p<p_{\mathrm{crit}}$, which is illustrated in Figure~\ref{imgg2}.
\begin{figure}
\centering
\begin{tikzpicture}[>=latex,xscale=1.35,yscale=1.25,scale=0.82]

\node[right] at (-2.1,4.2) {{\color{blue} $T\simeq \eps^{-\frac{1}{p'-p'_{\mathrm{crit}}}}$ stated in \cite{chenreissig2023}}};
\node[right] at (2.4,4.2) {{\color{red} $T\lesssim \eps^{-\frac{p}{p'-p'_{\mathrm{F}}}}$ stated in Theorem~\ref{b-u2}}};
\node[right] at (-2.1,4.7) {{\color{cyan} $T\lesssim \eps^{-\frac{1}{p'-p'_{\mathrm{crit}}}}$ stated in \cite{chenreissig2023}}};
\node[right] at (2.4,4.7) {{\color{magenta} $T\lesssim \eps^{-\frac{1}{p'-p'_{\mathrm{crit}}-\delta}}$ stated in Theorem~\ref{b-u1}}};
\node[right] at (2.4,3.7) {{\color{brown} $T\lesssim \eps^{-\frac{p}{p'-1-\frac{n}{4}p-\delta}}$ stated in Theorem~\ref{b-u3}}};

\fill[opacity = 0.5, color =blue]  (-2.5,4) -- (-2.5, 4.3) -- (-2.2,4.3) -- (-2.2, 4);
\fill[opacity = 0.5, color =cyan]  (-2.5,4.5) -- (-2.5, 4.8) -- (-2.2,4.8) -- (-2.2, 4.5);
\fill[opacity = 0.5, color =red]  (2,4) -- (2,4.3) -- (2.3,4.3) -- (2.3, 4);
\fill[opacity = 0.5, color =magenta]  (2,4.5) -- (2,4.8) -- (2.3,4.8) -- (2.3, 4.5);
\fill[opacity = 0.5, color =brown]  (2,3.5) -- (2,3.8) -- (2.3,3.8) -- (2.3, 3.5);

\begin{scope}[shift={(-4,0)}, yscale=0.5]

\draw[->] (-0.2,0) -- (3.7,0) node[below] {$\gamma$};
\draw[->] (0,-0.2) -- (0,6) node[left] {$p$};

\node[left] at (0,-0.2) {$0$};
\node[left] at (0,1) {$1$};

\fill[opacity=0.5, color=blue]
  plot[domain=0:0.5] ({\x},{1+4/(1+2*\x)})
  --
  plot[domain=0.5:0] ({\x},{1+2*\x});

\fill[opacity=0.5, color=cyan]
  plot[domain=0:0.5] ({\x},{1})
  --
  plot[domain=0.5:0] ({\x},{1+2*\x});


\fill[opacity=0.5, color=red]
  plot[domain=0.5:1] ({\x},{(4 + 2)/(1 + 2*\x)})
  --
  plot[domain=1:0.5] ({\x},{3});

\fill[opacity=0.5, color=red]
  plot[domain=1:3.5] ({\x},{2})
  --
  plot[domain=3.5:1] ({\x},{3});


\fill[opacity=0.5, color=magenta]
  plot[domain=0.5:1] ({\x},{1})
  --
  plot[domain=1:0.5] ({\x},{(4 + 2)/(1 + 2*\x)});

\fill[opacity=0.5, color=magenta]
  plot[domain=1:2] ({\x},{1})
  --
  plot[domain=2:1] ({\x},{2/(\x)});


\fill[opacity=0.5, color=brown]
  plot[domain=1:2] ({\x},{2/(\x)})
  --
  plot[domain=2:1] ({\x},{2});

\fill[opacity=0.5, color=brown]
  plot[domain=2:3.5] ({\x},{1})
  --
  plot[domain=3.5:2] ({\x},{2});

\draw[dashed] (0,1) -- (3.7,1);



\draw[dashed, domain=0:0.5]
  plot({\x},{1+4/(1+2*\x)}); 

\draw[domain=0:0.5]
  plot({\x},{1+2*\x});

\draw[dashed, domain=1:2]
  plot({\x},{2/(\x)}); 
\draw[dashed, color=black] (0.5,0) -- (0.5,3);
\node[below] at (0.5,0) {$n/2$};

\draw[dashed, color=black] (0,2) -- (3.7,2);
\node[left] at (0,2) {$2$};

\draw[dashed, color=black] (0,3) -- (3.7,3);

\node[left] at (0,3) {$p_{\mathrm{F}}$};
\draw[dashed, color=black, domain=0.5:1]
  plot({\x},{(4 + 2)/(1 + 2*\x)});

\node[right] at (1,-2) {with $n=1$};

\end{scope}

\begin{scope}[shift={(4,0)}]

\draw[->] (-0.2,0) -- (4.5,0) node[below] {$\gamma$};
\draw[->] (0,-0.2) -- (0,3.4) node[left] {$p$};

\node[left] at (0,-0.2) {{$0$}};
\draw[dashed, color=black] (0,1.7583) -- (4.5,1.7583);
\node[left] at (6,2) {$\displaystyle p=\frac{n+\sqrt{n^2+16n}}{2n}$};

\fill[opacity=0.5, color=blue]
  plot[domain=0:1.1375] ({\x},{1 + 2*\x/3})
  --
  plot[domain=1.1375:0] ({\x},{1 + 4/(3 + 2*\x)});


\fill[opacity=0.5, color=cyan]
  plot[domain=0:1.137458] ({\x},{1})
  --
  plot[domain=1.137458:0] ({\x},{1 + 2*\x/3});

\fill[opacity=0.5, color=cyan]
  plot[domain=1.137458:1.5] ({\x},{1})
  --
  plot[domain=1.5:1.137458] ({\x},{2/(\x)});

\fill[opacity=0.5, color=magenta]
  plot[domain=1.5:2] ({\x},{1})
  --
  plot[domain=2:1.5] ({\x},{2/(\x)});


\fill[opacity=0.5, color=brown]
  plot[domain=1.137458:2] ({\x},{2/(\x)})
  --
  plot[domain=2:1.137458] ({\x},{1.7583});

\fill[opacity=0.5, color=brown]
  plot[domain=2:4.5] ({\x},{1})
  --
  plot[domain=4.5:2] ({\x},{1.7583});

\draw[dashed, color=black, domain=1.1:2]
  plot({\x},{2/(\x)}) node[below] {$p=\frac{2}{\gamma}$};

\draw[dashed, color=black]  (0, 1)--(4.5, 1);
\draw[dashed, color=black]  (1.5, 0)--(1.5, 3.4);

\draw (1.5,1)--(1.5,1.7);
\draw[dashed] (0,1)--(1.1,1.75);
\draw[dashed, domain=0:4.5] 
  plot({\x},{1+4/((2*\x)+3)}) node[right]{$p=p_{\mathrm{crit}}$};

\node[right] at (1,-1) {with $2\le n\le6$};

\end{scope}

\end{tikzpicture}
\caption{Description of the lifespan in the $\gamma-p$ plane}
\label{imgg2}
\end{figure}
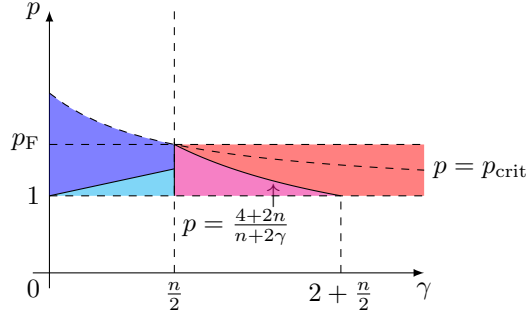

	\end{rem}
	\section{Proof of global existence}\label{secproglo}
	In this section, we give the proof of Theorem~\ref{ex}. Before proceeding, we state the following result of \cite{chenreissig2023}. 
	\begin{lem}[{\protect \cite[Theorem1]{chenreissig2023}}]\label{crt1}
		Let $n\le6$, $s\in(0,1]$ and $0<\ga<n/2$. Let $p$ fulfil $p>1+\frac{4}{2\ga+n}$, $p\ge 1+\frac{2\ga}{n}$ and $p\le\frac{n}{n-2s}$ if $n>2s$. Let us assume 
		\begin{align}
			(u_0,u_1)\in(\Hg\cap H^s)\times(\Hg\cap L^2).
		\end{align}Then, for a sufficiently small $\eps$, there is a unique mild solution
		\begin{align}
			u\in C([0,\infty):H^{s})
		\end{align}
		to the Cauchy problem for the semilinear damped wave equation \eqref{bdw}. Furthermore, 
		the solution $u$ satisfies the following decay estimates:
		\begin{align}
			\|u(t,\cdot)\|_{L^2}&\les
			\eps(1+t)^{-\frac{\ga}{2}}\left(\|(u_0,u_1)\|_{(\Hg\cap H^s)\times(\Hg\cap L^2)}\right)\\
			\|u(t,\cdot)\|_{H^s}&\les
			\eps(1+t)^{-\frac{s+\ga}{2}}\left(\|(u_0,u_1)\|_{(\Hg\cap H^s)\times(\Hg\cap L^2)}\right).
		\end{align}
	\end{lem}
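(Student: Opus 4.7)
The plan is a standard contraction-mapping argument on the Duhamel formula, with linear decay estimates calibrated to the $\Hg$ scale and a nonlinear bookkeeping that produces the threshold $p>1+4/(n+2\ga)$.

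\textbf{Step 1 (linear decay).} Starting from the explicit formula for $\hat K$ recalled in Section~\ref{secmainglo}, split the symbol into a low-frequency piece on $\{|\xi|\le 1/2\}$ and a high-frequency piece on $\{|\xi|\ge 1/2\}$. On the low-frequency zone one checks $|\hat K(t,\xi)|\les e^{-ct|\xi|^2}$, and the elementary bound $\sup_{|\xi|\le 1/2}|\xi|^{\ga}e^{-ct|\xi|^2}\les (1+t)^{-\ga/2}$ together with Plancherel and the definition of $\Hg$ yields
\beqs
\|K(t)\ast f\|_{L^2(|\xi|\le 1/2)}\les (1+t)^{-\ga/2}\|f\|_{\Hg};
\eeqs
inserting an extra $|\xi|^{s}$ factor gains a further $(1+t)^{-s/2}$. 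On the high-frequency zone, $|\hat K|\les |\xi|^{-1}e^{-t/2}$ and $|\hat K'|\les e^{-t/2}$; since $s\le 1$, the factor $|\xi|^{s-1}$ is bounded on $|\xi|\ge 1/2$, so $\|K(t)\ast f\|_{H^s(|\xi|\ge 1/2)}\les e^{-ct}\|f\|_{L^2}$, and analogously for $K'$ against $H^s$ data. Summing the two regions produces the linear decay estimates claimed in the lemma.

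\textbf{Step 2 (solution space and nonlinear embedding).} Work in
\beqs
X:=\bigl\{u\in C([0,\infty);H^s):\|u\|_X<\infty\bigr\},
\eeqs
\beqs
\|u\|_X:=\sup_{t\ge 0}(1+t)^{\ga/2}\|u(t)\|_{L^2}+\sup_{t\ge 0}(1+t)^{(s+\ga)/2}\|u(t)\|_{H^s},
\eeqs
and let $\Phi(u)$ denote the right-hand side of the mild formula. Step~1 bounds the linear part of $\Phi$ by $C\eps\|(u_0,u_1)\|_{(\Hg\cap H^s)\times(\Hg\cap L^2)}$ in $\|\cdot\|_X$. For the nonlinear term, the Hardy--Littlewood--Sobolev inclusion $L^q\hookrightarrow\Hg$ with $1/q=1/2+\ga/n$ yields $\||u|^p\|_{\Hg}\les\|u\|_{L^{pq}}^{p}$; the hypothesis $p\ge 1+2\ga/n$ is exactly $pq\ge 2$, and the Gagliardo--Nirenberg/Sobolev interpolation
\beqs
\|u\|_{L^{pq}}\les\|u\|_{L^2}^{\theta}\|u\|_{H^s}^{1-\theta},\qquad \tfrac{1}{pq}=\tfrac{\theta}{2}+\tfrac{1-\theta}{2^{\ast}},\quad 2^{\ast}:=\tfrac{2n}{n-2s},
\eeqs
is legitimate as long as $pq\le 2^{\ast}$, which (together with $2p\le 2^{\ast}$ needed for the high-frequency estimate $\||u|^p\|_{L^2}=\|u\|_{L^{2p}}^{p}$) is ensured by the standing hypothesis $p\le n/(n-2s)$.

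\textbf{Step 3 (Duhamel estimate and emergence of the critical index).} Split $\int_0^t=\int_0^{t/2}+\int_{t/2}^t$. On $[0,t/2]$ one has $(1+t-\tau)\simeq (1+t)$, and a direct computation of the interpolation exponent collapses to the clean identity
\beqs
\|u(\tau)\|_{L^{pq}}^{p}\les\|u\|_X^{p}(1+\tau)^{-(p-1)(n+2\ga)/4},
\eeqs
so that
\beqs
\Bigl\|\int_0^{t/2}K(t-\tau)\ast|u|^p\,d\tau\Bigr\|_{L^2}\les (1+t)^{-\ga/2}\|u\|_X^{p}\int_0^{t/2}(1+\tau)^{-(p-1)(n+2\ga)/4}\,d\tau,
\eeqs
and the remaining time integral is uniformly bounded in $t$ precisely when $(p-1)(n+2\ga)/4>1$, i.e.\ $p>1+4/(n+2\ga)$. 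On $[t/2,t]$ the exponential factor $e^{-c(t-\tau)}$ combined with $\||u|^p\|_{L^2}\les\|u\|_X^{p}(1+\tau)^{-p(s+\ga)/2}$ (absorbable because $\tau\ge t/2$) yields an even faster decay. The same template, with the additional $(1+t-\tau)^{-s/2}$ factor from Step~1, closes the $H^s$ component of $\|\cdot\|_X$. A standard difference estimate based on $\bigl||a|^p-|b|^p\bigr|\les(|a|^{p-1}+|b|^{p-1})|a-b|$ and H\"older then turns $\Phi$ into a contraction on a small ball of $X$ for $\eps$ sufficiently small, producing the unique mild solution and the announced decay rates.

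The main obstacle is the simultaneous reconciliation of (i) the HLS condition $pq\ge 2$, (ii) the Sobolev condition $pq\le 2^{\ast}$ with the associated interpolation parameter $\theta\in[0,1]$, and (iii) the time-integrability condition $(p-1)(n+2\ga)/4>1$; it is this triple coupling that forces the precise assumptions of the lemma, and the bulk of the technical work lies in carrying these three scales cleanly through the low/high-frequency and early/late-time splittings.
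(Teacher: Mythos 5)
The paper gives no proof of this lemma: it is imported verbatim as Theorem~1 of \cite{chenreissig2023} and used as a black box, so there is no internal argument to compare against. Your sketch is the standard route to such a statement and, as far as it goes, the bookkeeping is right: the interpolation exponent does collapse to $\|u(\tau)\|_{L^{pq}}^p\les\|u\|_X^p(1+\tau)^{-(p-1)(n+2\ga)/4}$, so integrability on $[0,t/2]$ is exactly $p>1+\frac{4}{n+2\ga}$, the condition $pq\ge2$ is exactly $p\ge1+\frac{2\ga}{n}$, and $2p\le\frac{2n}{n-2s}$ is exactly $p\le\frac{n}{n-2s}$; this matches the hypotheses of the lemma and is consistent with the method of the cited reference. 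One substantive caveat: your Step~2 rests on the embedding $L^q\hookrightarrow\Hg$ with $\frac1q=\frac12+\frac{\ga}{n}$, which requires $q>1$, i.e.\ $\ga<\frac{n}{2}$. The lemma as restated here says only $\ga>0$, and under that literal reading your proof breaks down for $\ga\ge\frac{n}{2}$ --- indeed the impossibility of running this HLS step for large $\ga$ is precisely the obstruction that Theorem~\ref{ex} and Lemma~\ref{monotonic} of this paper are designed to circumvent, and the cited Theorem~1 of \cite{chenreissig2023} is in fact stated for $0<\ga<\frac{n}{2}$. Minor glossed points that would need care in a full write-up: the bound $|\what{K}|\les|\xi|^{-1}e^{-t/2}$ is not uniform near $|\xi|=\frac12$ (one needs the extra factor $t$ there, harmless after adjusting the exponential rate), and the estimates for $K'(t)*u_0$ and for the difference $|u|^p-|v|^p$ in the contraction step need the same H\"older/interpolation exponents spelled out.
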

	 Before proving Theorem~\ref{ex}, we state that $(\Hg\cap H^s)\times(\Hg\cap L^2)$ is monotonic with the parametar $\ga$.
	 \begin{lem}\label{monotonic}
	 Let $s\ge0$ be fixed. Then for any parameters satisfying $0\le\tilde{\ga}<{\ga}$, the following inclusion holds:
	 \begin{align}
	 H^s\cap\dot{H}^{-{\ga}}\subset H^s\cap\dot{H}^{-\tilde{\ga}}.
	 \end{align}
	 \end{lem}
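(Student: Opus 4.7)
The plan is to split the frequency domain into low and high frequencies and control each piece separately, noting that the $\dot H^{-\gamma}$ norm is weighted by $|\xi|^{-\gamma}$, which is large on $\{|\xi|<1\}$ and small on $\{|\xi|\ge 1\}$, while for $s\ge 0$ the $H^s$ norm dominates the $L^2$ norm.

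Given $f\in H^s\cap\dot H^{-\gamma}$, it suffices to bound $\|f\|_{\dot H^{-\tilde\gamma}}^2=\int_{\bR^n}|\xi|^{-2\tilde\gamma}|\hat f(\xi)|^2\,d\xi$ by a finite quantity. I would write this integral as the sum of the piece over $B(1)=\{|\xi|<1\}$ and the piece over its complement. On $B(1)$, since $\gamma-\tilde\gamma>0$ and $|\xi|<1$, one has $|\xi|^{-2\tilde\gamma}=|\xi|^{2(\gamma-\tilde\gamma)}|\xi|^{-2\gamma}\le |\xi|^{-2\gamma}$, so that the low-frequency contribution is bounded by $\|f\|_{\dot H^{-\gamma}}^2$. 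On the complement, since $\tilde\gamma\ge 0$, we simply use $|\xi|^{-2\tilde\gamma}\le 1$, which gives a bound by $\|\hat f\|_{L^2}^2=\|f\|_{L^2}^2\le\|f\|_{H^s}^2$ via Plancherel and the inclusion $H^s\subset L^2$ for $s\ge 0$.

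Combining these two estimates yields
\begin{align}
\|f\|_{\dot H^{-\tilde\gamma}}\lesssim \|f\|_{\dot H^{-\gamma}}+\|f\|_{H^s},
\end{align}
which shows $f\in \dot H^{-\tilde\gamma}$, and since $f\in H^s$ by assumption, we conclude $f\in H^s\cap \dot H^{-\tilde\gamma}$.

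There is no substantive obstacle: the whole argument is a dyadic split in frequency, exploiting that $|\xi|^{-2\gamma}$ is monotone in $\gamma$ when $|\xi|<1$ (in the sense favorable to us) and bounded above by $1$ when $|\xi|\ge 1$ and $\tilde\gamma\ge 0$. The only mild point requiring care is the high-frequency estimate, which is precisely where the hypothesis $s\ge 0$ (not merely $s\in\bR$) enters, via $H^s\hookrightarrow L^2$.
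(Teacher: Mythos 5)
Your proof is correct and follows essentially the same route as the paper: split the $\dot H^{-\tilde\gamma}$ norm at $|\xi|=1$, use $|\xi|^{-\tilde\gamma}\le|\xi|^{-\gamma}$ on the low frequencies and $|\xi|^{-\tilde\gamma}\le 1\le\langle\xi\rangle^s$ (i.e.\ the $H^s\hookrightarrow L^2$ embedding for $s\ge0$) on the high frequencies. Your write-up is in fact cleaner than the paper's, which contains a couple of typographical slips (a dropped $\hat f$ and a stray $H^{s'}$) in the same chain of inequalities.
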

	 \begin{proof}[\textbf{Proof of Lemma~{\ref{monotonic}}}]
	
		\begin{align}
			&\|f\|_{H^{s}\cap \dot{H}^{-\tilde{\ga}}}\\
			\simeq&\|\langle\xi\rangle^{s}\hat{f}\|_{L^2}+\||\xi|^{-\tilde{\ga}}\hat{f}\|_{L^2}\\
			\les&\|\chi_{|\xi|\ge1}(\xi)\langle\xi\rangle^{s}\hat{f}\|_{L^2}+\|\chi_{|\cdot|\le1}(\xi)|\xi|^{-\tilde{\ga}}\|_{L^2}\\
			\le&\|\chi_{|\cdot|\ge1}(\xi)\langle\xi\rangle^{s}\hat{f}\|_{L^2}+\|\chi_{|\cdot|\le1}(\xi)|\xi|^{-\ga}\|_{L^2}\\
			\les&\|f\|_{H^{s}\cap \dot{H}^{-\ga}}.
		\end{align}
	 \end{proof}
	\begin{proof}[\textbf{Proof of Theorem~{\ref{ex}}}]
		We may assume without loss of generality that 
		\begin{align}
			2p'-2-n/2<\tilde{\ga}.
		\end{align}
		
		By \eqref{ga'2} and Lemma \ref{monotonic}, we get $(\Hg\cap H^s)\times(\Hg\cap L^2)\subset(H^s\cap\dot{H}^{-\tilde{\ga}})\times(L^2\cap\dot{H}^{-\tilde{\ga}})$.
		By \eqref{pga}, \eqref{ga'1} and \eqref{ga'2}, $\tilde{\ga}$ satisfies the conditions of the parameter $\ga$ in Lemma  \ref{crt1}. Hence, we invoke Lemma \ref{crt1} with $\ga$ repleced $\ga'$ in \eqref{ga'1} and \eqref{ga'2} to get Theorem~\ref{ex}.
	\end{proof}
	\section{Proof of blow-up theorems}\label{secproblow}
	In this section, we give the proof of Theorem~\ref{b-u1} and Theorem~\ref{b-u2}.
	\subsection{Construction of a bump function used in the proof}
We need a non-trivial bump function $\phi\in C_0^{\infty}(\bR^n)$ which satisfies the following conditions. For the proof, we need some bump function given by the following lemma.

	\begin{lem}
	There exists a non-trivial function $\phi\in C_0^{\infty}(\bR^n)$ which satisfies the following conditions:
		\begin{enumerate}[label=\textup{(\roman*)}]
	\item\label{i} $\phi\ge0$ .
	\item\label{ii}$\what{\phi}\ge0$ .
	\item\label{iii}$\phi(Rx)\le\phi(rx)$ for all $0<r<R\text{ and } x\in\bR^n$. 
	\end{enumerate}
	\end{lem}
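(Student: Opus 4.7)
The plan is to take $\phi$ to be the self-convolution of a standard radial bump. Let $\eta \in C_0^{\infty}(\mathbb{R}^n)$ be radial, non-negative, non-trivial, and radially non-increasing; for concreteness, one can choose
\begin{align}
\eta(x) := \exp\!\bigl(-1/(1-|x|^2)\bigr)\,\chi_{|x|<1}(x).
\end{align}
Set $\phi := \eta * \eta$, so that $\phi \in C_0^{\infty}(\mathbb{R}^n)$ is non-trivial and supported in $\overline{B(0,2)}$. Property \ref{i} is immediate since $\eta \ge 0$. For \ref{ii}, $\eta$ is real-valued and even (every radial function is even), so $\widehat{\eta}$ is real, and hence $\widehat{\phi} = \widehat{\eta}^{\,2} \ge 0$.

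Property \ref{iii} is the only step with real content: the formula $\phi(x) = \int \eta(y)\eta(x-y)\,dy$ does not directly exhibit radial monotonicity. To extract it, I would use the layer-cake representation
\begin{align}
\eta(x) = \int_0^{\eta(0)} \chi_{B(\rho(t))}(x)\, dt,
\end{align}
where $\rho(t)$ is the unique radius with $\eta \equiv t$ on $\{|y|=\rho(t)\}$ (well-defined because $\eta$ is strictly radially decreasing on its support). Substituting yields
\begin{align}
\phi(x) = \iint_{[0,\eta(0)]^2}\!\bigl(\chi_{B(\rho(t))} * \chi_{B(\rho(s))}\bigr)(x)\, dt\, ds,
\end{align}
and each integrand equals $\bigl|B(0,\rho(t)) \cap B(x,\rho(s))\bigr|$, a quantity depending only on $|x|$ and manifestly non-increasing in $|x|$ (the overlap of two balls of fixed radii shrinks as their centers separate). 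Integrating a non-negative family of radial, radially non-increasing functions preserves both properties, which is property \ref{iii}.

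The main obstacle is \ref{iii}; properties \ref{i} and \ref{ii} are essentially built into the construction. A direct approach via a change of variables in $\phi(rx) = \int \eta(y)\eta(rx-y)\,dy$ ultimately reduces to the same ball-intersection monotonicity, so the layer-cake decomposition above is the cleanest route I know to make the monotonicity transparent.
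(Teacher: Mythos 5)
Your construction is the same as the paper's ($\phi=\eta*\eta$ for a radial, non-negative, radially non-increasing bump $\eta$, with (i) and (ii) handled identically via $\widehat{\phi}=\widehat{\eta}^{\,2}$), but your verification of the monotonicity property is a genuinely different argument. The paper differentiates, computing $\partial_R\bigl(\phi(Rx)\bigr)=x\cdot(\nabla\phi)(Rx)$ as a convolution integral, then symmetrizes via $y\mapsto-y$ to pair the half-spaces $\{x\cdot y\ge 0\}$ and $\{x\cdot y\le 0\}$; the sign then follows from $\eta'\le 0$ together with $|Rx-y|\le|Rx+y|$ on $\{x\cdot y\ge0\}$. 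You instead use the layer-cake decomposition to write $\phi$ as a superposition of $\chi_{B(\rho(t))}*\chi_{B(\rho(s))}$, reducing everything to the fact that $\bigl|B(0,r_1)\cap B(x,r_2)\bigr|$ is non-increasing in $|x|$. Both are correct. Your route avoids differentiation under the integral and the sign bookkeeping, and is arguably more conceptual; its one soft spot is that the ball-overlap monotonicity is asserted as ``manifest'' --- it is true and standard (slice into lines parallel to the axis of centers and use that the overlap of two intervals of fixed lengths is non-increasing in the distance between their midpoints), but it is an unproved sub-claim of roughly the same depth as the paper's symmetrization step, so you should record that one-line slicing argument. One further remark: both your proof and the paper's establish that $\phi$ is radially non-increasing, i.e.\ $\phi(rx)\ge\phi(Rx)$ for $0<r<R$, which is the reverse of the inequality as literally printed in (iii); that appears to be a typo in the statement, and your proof matches the paper's evident intent.
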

	\begin{proof}
	Let $\tilde{\phi}\in C_0^\infty(\bR^n)$ satisfy \tref{i}, \tref{iii} and
	\begin{align}
	|x|=|y|\Rightarrow\tilde{\phi}(x)=\tilde{\phi}(y)\ \ \ \text{for all } x,y\in\bR^n.\label{newcon}
	\end{align}Let $\eta\in C_0^{\infty}\left([0,\infty)\right)$ satisfy $\tilde{\phi}(x)=\eta(|x|)$. Then, $\eta'\le0$.
	 For example, function $f$ such that
	\begin{align}
	f(x)=
	\begin{cases}
	e^{-\frac{1}{1-|x|^2}}&|x|<1\\
	0&|x|\ge1
	\end{cases}.
	\end{align}Then, $\tilde{\phi}$ is a real-valued function.
	Now define the bump function $\phi\coloneqq\tilde{\phi}*\tilde{\phi}$. We will show that $\phi$ satisfies the conditions \tref{i}, \tref{ii} and \tref{iii}.  
	\begin{enumerate}[label=(\tr{\roman*})]
	\item Because of $\tilde{\phi}\ge0$, we have $\phi=\tilde{\phi}*\tilde{\phi}\ge0$.
	\item$\what{\phi}$ satisfies that $\what{\phi}=\what{\tilde{\phi}*\tilde{\phi}}=\what{\tilde{\phi}}^2$. Where, $\tilde{\phi}$ is real-valued and an even function because of \eqref{newcon}. Therefore, $\what{\tilde{\phi}}$ is real-valued and $\what{\phi}$ is non-negative function.
	\item To prove \tr{(iii)}, it is sufficient to prove that $\pr_R(\phi(Rx))\le0$. 
	\begin{align}
	&\pr_R(\phi(Rx))\\
	=&x\cdot(\nabla\phi)(Rx)\\
	=&\int_{\bR^n}x\cdot(\nabla\tilde{\phi})(y)\tilde{\phi}(Rx-y)dy\\
	=&\int_{\bR^n}x\cdot\left(\frac{y}{|y|}\eta'(|y|)\right)\tilde{\phi}(Rx-y)dy\\
	=&\int_{x\cdot y\le0}x\cdot\left(\frac{y}{|y|}\eta'(|y|)\right)\tilde{\phi}(Rx-y)dy
	+\int_{x\cdot y\ge0}x\cdot\left(\frac{y}{|y|}\eta'(|y|)\right)\tilde{\phi}(Rx-y)dy.\label{half}
	\end{align}
	To change the variable to $y'=-y$, the first half of \eqref{half} is
	\begin{align}
	\int_{x\cdot y'\ge0}x\cdot\left(\frac{-y'}{|y'|}\eta'(|y'|)\right)\tilde{\phi}(Rx+y')dy'.
	\end{align} So, with attention for $|y|=|-y|$, \eqref{half} is
	\begin{align}
	&-\int_{x\cdot y\ge0}x\cdot\left(\frac{y}{|y|}\eta'(|y|)\right)\tilde{\phi}(Rx+y)dy
	+\int_{x\cdot y\ge0}x\cdot\left(\frac{y}{|y|}\eta'(|y|)\right)\tilde{\phi}(Rx-y)dy\\
	=&\int_{x\cdot y\ge0}x\cdot\left(\frac{y}{|y|}\eta'(|y|)\right)(\tilde{\phi}(Rx+y)-\tilde{\phi}(Rx+y))dy\\
	=&\int_{x\cdot y\ge0}x\cdot\left(\frac{y}{|y|}\eta'(|y|)\right)(\eta(|Rx-y|)-\eta(|Rx+y|))dy\\
	=&\int_{x\cdot y\ge0}\left(x\cdot\frac{y}{|y|}\right)\eta'(|y|)(\eta(|Rx-y|)-\eta(|Rx+y|))dy.\label{le0}
	\end{align}
	On the integrand function in \eqref{le0}, since $x\cdot y\ge0$. we have $|Rx-y|\le|Rx+y|$. Thus, $\eta(|Rx-y|)-\eta(|Rx+y|)\ge0$. Moreover, we have $x\cdot(y/|y|)\ge0$ and $\eta'(|y|)\le0$. So, we obtain $\eqref{le0}\le0$.
	\end{enumerate}
	\end{proof}
	\begin{lem}
	Let $\phi\in C_0^{\infty}(\bR^n)$ satisfy the conditions \tref{i}, \tref{ii} and \tref{iii}. Then, for all $k\in\bN$, $\phi^k$ also satisfies those conditions.
	\end{lem}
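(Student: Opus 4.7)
The plan is to verify the three conditions \textup{(i)}, \textup{(ii)}, \textup{(iii)} in turn for $\phi^k$, each following directly from the corresponding property of $\phi$ together with an elementary observation. Note first that $\phi^k\in C_0^\infty(\bR^n)$ since the space of smooth compactly supported functions is closed under pointwise products.

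For \textup{(i)}, since $\phi\ge0$ pointwise and the $k$-th power of a non-negative real is non-negative, we immediately obtain $\phi^k\ge0$. For \textup{(iii)}, fix $0<r<R$ and $x\in\bR^n$; then by hypothesis on $\phi$ we have $0\le\phi(rx)\le\phi(Rx)$, and monotonicity of $t\mapsto t^k$ on $[0,\infty)$ gives $\phi(rx)^k\le\phi(Rx)^k$, i.e.\ $\phi^k(rx)\le\phi^k(Rx)$.

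The only nontrivial step is \textup{(ii)}. The plan is to argue by induction on $k$, using the fact that the Fourier transform turns products into convolutions: for any Schwartz functions $f,g$,
\begin{align}
\what{fg}=c_n\,\what{f}*\what{g}
\end{align}
for a positive constant $c_n$ (depending only on the chosen Fourier normalization). Applying this with $f=\phi^{k-1}$ and $g=\phi$ yields $\what{\phi^k}=c_n\,\what{\phi^{k-1}}*\what{\phi}$. By the inductive hypothesis $\what{\phi^{k-1}}\ge0$ and by assumption $\what{\phi}\ge0$; since the convolution of two non-negative functions is non-negative and $c_n>0$, we conclude $\what{\phi^k}\ge0$, closing the induction with the base case $k=1$ being the hypothesis on $\phi$.

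I expect no real obstacle: the only point of care is to ensure that the multiplicative constant $c_n$ in the product-to-convolution identity is positive (which it is, under any standard normalization of the Fourier transform), and that all convolutions are well defined, which is guaranteed since $\phi,\phi^{k-1}\in\mathcal{S}(\bR^n)$ so their Fourier transforms are again Schwartz functions.
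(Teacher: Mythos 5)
Your proposal is correct and follows essentially the same route as the paper: (i) and (iii) by pointwise monotonicity of $t\mapsto t^k$ on $[0,\infty)$, and (ii) by writing $\what{\phi^k}$ as an iterated convolution of the non-negative function $\what{\phi}$ (the paper states the $k$-fold convolution directly rather than inducting, but the content is identical). Your extra care about the positive normalization constant and well-definedness of the convolutions is a harmless refinement of the same argument.
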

	\begin{proof}
	We will show that $\phi^k$ satisfies \tref{i}, \tref{ii} and \tref{iii}. Then, since 
	\begin{enumerate}[label=(\tr{\roman*})]
	\item Since $\phi\ge0$, we get $\phi^k\ge0$.
	\item We have 
	\begin{align}
	\what{\phi^k}=\what{\phi}*\ldots*\what{\phi}.
	\end{align}Then, since $\what{\phi}\ge0$, We get $\what{\phi^k}\ge0$. 
	\item Since $\phi$ satisfies \tref{iii}, we get that $\phi^k$ satisfies \tref{iii}.
	\end{enumerate}
	\end{proof}
	\subsection{Proof of the blow-up theorems}
	Let $u$ be the local weak solution of \eqref{bdw}. If the lifespan $T$ is in $[0,1]$, then we get $T\le\eps^{-a}\text{ for all }a>0\text{ and }0<\eps\ll1$. Therefore, let us assume $T>1$. Let $\tilde{\eta}\in C_0^\infty{[0,\infty)}$ be a monotonically non-increasing function that satisfies $\supp\tilde{\eta}\subset[0,1]$ and $\tilde{\eta}(t)=1 \text{ for all }t\in[0,1/2]$. Let $\tilde{\phi}\in C_0^{\infty}(\bR^n)$ satisfy \tref{i}, \tref{ii} and \tref{iii}. We take $l\in\bN$ being $l>2p'$ and we set $\phi\coloneqq\tilde\phi^l$ and $\eta\coloneqq\tilde\eta^l$. We set ${\phi}_R(x)\coq\tilde{\phi}(R^{-1}x)\text{ and }\eta_R(t)\coq\eta(R^{-2}t)$ for all $R>0$. Let a function $I:[1,\sqrt{T})\to\bR$ be
	\begin{align}
	I(R)\coq\iint_{\bR^n\times[0,T)}|u(x,t)|^p\phi_R(x)\eta_R(t)dxdt.\label{I}
	\end{align}
	
	First, we will show a lemma for the theorems of blow-up.
	\begin{lem}\label{1lem}
	The function $I(R)$ satisfies
	\begin{align}
	I(R)\le-\eps\int_{\bR^n}(u_0+u_1)\phi_R(x)dx+CI(R)^{1/p}R^{\frac{n+2}{p'}-2}\label{lem}
	\end{align}
	and
	\begin{align}
	I(R)\les-\eps\int_{\bR^n}(u_0+u_1)\phi_R(x)dx+CR^{n+2-2p'}\label{lem2}
	\end{align}
	for all $R\in[1,\sqrt{T})$ with a constantant $C>0$ .
	\end{lem}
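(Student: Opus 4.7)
The plan is to run the classical test function (rescaled Mitidieri--Pohozaev) argument with the test function $\psi_R(x,t):=\phi_R(x)\eta_R(t)$. First, I would take $\psi_R$ as a test function in the weak formulation of \eqref{bdw} on $\bR^n\times[0,T)$. Since $\eta_R\equiv 0$ for $t\ge R^2<T$ and $\phi_R$ is compactly supported, all boundary terms at $t=T$ and at spatial infinity vanish. Integration by parts in $t$ twice on the $u_{tt}$ term and once on the $u_t$ term, together with Green's identity for $-\Delta u$, yields the pointwise identity
\begin{align}
I(R)=-\eps\!\int_{\bR^n}\!u_1\phi_R\,\eta_R(0)\,dx-\eps\!\int_{\bR^n}\!u_0\phi_R\bigl(\eta_R'(0)-\eta_R(0)\bigr)dx
+\iint u\bigl(\phi_R\prtt\eta_R-\phi_R\prt\eta_R-\Delta\phi_R\,\eta_R\bigr)dxdt.
\end{align}
Since $\eta=\tilde\eta^l$ with $\tilde\eta\equiv1$ on $[0,1/2]$, we have $\eta_R(0)=1$ and $\eta_R'(0)=0$, so the boundary term collapses exactly to $-\eps\int(u_0+u_1)\phi_R\,dx$, producing the target left-hand side.

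Next, I would estimate the remainder by H\"older's inequality with exponents $p,p'$, splitting the weight as
\begin{align}
\Bigl|\iint u\,F\,dxdt\Bigr|\le\left(\iint|u|^p\phi_R\eta_R\,dxdt\right)^{1/p}
\left(\iint\frac{|F|^{p'}}{(\phi_R\eta_R)^{p'/p}}dxdt\right)^{1/p'},
\end{align}
where $F$ is each of $\phi_R\prtt\eta_R$, $\phi_R\prt\eta_R$, $\Delta\phi_R\,\eta_R$. The first factor is $I(R)^{1/p}$; the key technical point is to show that the second factor is bounded by a constant multiple of $R^{(n+2)/p'-2}$.

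The main obstacle, and the reason for the choice $\phi=\tilde\phi^l,\eta=\tilde\eta^l$ with $l>2p'$, is verifying the weight bound
\begin{align}
\frac{|\Delta\phi_R|^{p'}}{\phi_R^{p'/p}}\les R^{-2p'}\chi_{\supp\phi_R},\qquad
\frac{|\prt^k\eta_R|^{p'}}{\eta_R^{p'/p}}\les R^{-2kp'}\chi_{\supp\eta_R}\quad(k=1,2).
\end{align}
Since $\Delta(\tilde\phi^l)$ is a sum of terms proportional to $\tilde\phi^{l-1}\Delta\tilde\phi$ and $\tilde\phi^{l-2}|\nabla\tilde\phi|^2$, one gets $|\Delta(\tilde\phi^l)|\les\tilde\phi^{l-2}$, so $|\Delta\phi_R|^{p'}\phi_R^{-p'/p}\les R^{-2p'}\tilde\phi_R^{(l-2)p'-lp'/p}$; the exponent is nonnegative precisely when $l\ge 2p'$, giving a bounded function. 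The same algebra handles $\prt_t\eta_R$ and $\prtt\eta_R$. Integrating these bounds over the support of $\psi_R$, whose volume is $\les R^{n+2}$, yields
\begin{align}
\iint|F|^{p'}(\phi_R\eta_R)^{-p'/p}dxdt\les R^{-2p'}\cdot R^{n+2}=R^{n+2-2p'}
\end{align}
for each of the three choices of $F$ (the $\prtt\eta_R$ contribution is even smaller, $R^{n+2-4p'}$). Combining with the previous identity gives \eqref{lem}.

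Finally, \eqref{lem2} follows from \eqref{lem} by Young's inequality: $CI(R)^{1/p}R^{(n+2)/p'-2}\le\tfrac12I(R)+C'R^{p'((n+2)/p'-2)}=\tfrac12I(R)+C'R^{n+2-2p'}$, and absorbing $\tfrac12 I(R)$ into the left-hand side yields \eqref{lem2}. The only subtle point in the whole argument is the bookkeeping that guarantees the weight ratios are controlled by constants on the support, which is why the auxiliary lemmata about $\phi^k$ preserving properties \tref{i}--\tref{iii} were established.
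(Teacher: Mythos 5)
Your proposal is correct and follows essentially the same route as the paper: the weak formulation producing the boundary term $-\eps\int_{\bR^n}(u_0+u_1)\phi_R\,dx$, H\"older's inequality with the weight $(\phi_R\eta_R)^{-p'/p}$, the choice $\phi=\tilde\phi^l$, $\eta=\tilde\eta^l$ with $l>2p'$ to keep the weight ratios bounded (you bound them pointwise and multiply by the measure $R^{n+2}$ of the support, the paper changes variables --- the same computation), and Young's inequality for \eqref{lem2}. Only a transcription slip: the $u_0$ boundary term should carry the factor $\eta_R(0)-\eta_R'(0)$ (equivalently $+\eps\int u_0\phi_R(\eta_R'(0)-\eta_R(0))\,dx$), not the sign you wrote, though the collapsed form $-\eps\int(u_0+u_1)\phi_R\,dx$ you state afterwards is correct.
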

	\begin{proof}[\textbf{Proof of Lemma \ref{1lem}}]
	First, since $u$ is weak solution of the Cauchy problem \eqref{bdw}, we have
	\begin{align}
	&I(R)\\
	=&-\eps\int_{\bR^n}(u_0+u_1)\phi_R(x)dx\\
	&+\iint_{\bR^n\times[0,T)}u\left(\prtt-\prt-\Delta\right)(\phi_R\eta_R)dxdt\label{27}.
	\end{align}
	We further estimate the term \eqref{27} as
	\begin{align}
	&\iint_{\bR^n\times[0,T)}u\left(\prtt-\prt-\Delta\right)(\phi_R\eta_R)dxdt\\
	=&\iint_{\supp\phi_R\eta_R}\left(|u|^p\phi_R\eta_R\right)^{1/p}\phi_R^{-1/p}\eta_{R}^{-1/p}\left(\prtt-\prt-\Delta\right)(\phi_R\eta_R)dxdt\\
	\le&I(R)^{1/p}\left(\iint_{\supp\phi_R\eta_R}\phi_R^{-p'/p}\eta_R^{-p'/p}\left|\left(\prtt-\prt-\Delta\right)(\phi_R\eta_R)\right|^{p'}dxdt\right)^{1/p'}\ \ (\text{by H\"older's inequality}).\\
	=&I(R)^{1/p}\left(\iint_{\supp\phi_R\eta_R}\phi^{-p'/p}\eta^{-p'/p}\left|\left(R^{-4}\frac{\pr^2}{\pr\tau^2}-R^{-2}\frac{\pr}{\pr\tau}-R^{-2}\Delta\right)(\phi\eta)\right|^{p'}R^{n+2}dyd\tau\right)^{1/p'}\\
	\les&I(R)^{1/p}R^{\frac{n+2}{p'}-2}\left(\iint_{\supp\phi\eta}\phi^{-p'/p}\eta^{-p'/p}\left|\left(\frac{\pr^2}{\pr\tau^2}-\frac{\pr}{\pr\tau}-\Delta\right)(\phi\eta)\right|^{p'}dyd\tau\right)^{1/p'}\label{Integral Check}
	\end{align}Here, we will show the value
	\begin{align}
	\iint_{\supp\phi\eta}\phi^{-p'/p}\eta^{-p'/p}\left|\left(\frac{\pr^2}{\pr\tau^2}-\frac{\pr}{\pr\tau}-\Delta\right)(\phi\eta)\right|^{p'}dyd\tau
	\end{align}is finite. The integrand function is
	\begin{align}
	&\phi^{-p'/p}\eta^{-p'/p}\left|\left(\frac{\pr^2}{\pr\tau^2}-\frac{\pr}{\pr\tau}-\Delta\right)(\phi\eta)\right|^{p'}\\
	=&\left|\phi^{-1/p}\eta^{-1/p}\left(\frac{\pr^2}{\pr\tau^2}-\frac{\pr}{\pr\tau}-\Delta\right)(\phi\eta)\right|^{p'}\label{24}
	\end{align}
	and, since the integral domain of \eqref{Integral Check} is bounded, it is sufficient to show that \eqref{24} is bounded. To do this, consider $\phi=\tilde{\phi}^l,\ \eta=\tilde{\eta}^l$, and express \eqref{24} in terms of $\tilde{\phi},\tilde{\eta}$ instead of $\phi,\eta$. We will express $\eta',\eta'',\Delta\phi$ as $\tilde{\eta},\tilde{\phi}$.
	\begin{align}
	&\eta'=(\tilde{\eta}^l)'=l\tilde{\eta}'\tilde{\eta}^{l-1}\\
	&\eta''=(\tilde{\eta}^l)''=l(l-1)(\tilde{\eta}')^{2}\tilde{\eta}^{l-2}+l\tilde{\eta}''\tilde{\eta}^{l-1}\\
	&\Delta\phi=\Delta(\tilde{\phi}^l)=l(l-1)|\nabla\tilde{\phi}|^2\tilde{\phi}^{l-2}+l(\Delta\tilde{\phi})\tilde{\phi}^{l-1}.
	\end{align}And, we get
	\begin{align}
	&\phi^{-1/p}\eta^{-1/p}\frac{\pr^2}{\pr\tau^2}(\phi\eta)\\
	=&\tilde{\phi}^{-l/p}\tilde{\eta}^{-l/p}\tilde{\phi}^l(l(l-1)(\tilde{\eta}')^{2}\tilde{\eta}^{l-2}+l\tilde{\eta}''\tilde{\eta}^{l-1})\\
	=&\tilde{\phi}^{l(1-1/p)}(l(l-1)(\tilde{\eta})')^2\tilde{\eta}^{l(1-1/p)-2}+l\tilde{\eta}''\tilde{\eta}^{l(1-1/p)-1})\\
	=&\tilde{\phi}^{l/p'}(l(l-1)(\tilde{\eta})')^2\tilde{\eta}^{l/p'-2}+l\tilde{\eta}''\tilde{\eta}^{l/p'-1}),\label{29}\\
	&\phi^{-1/p}\eta^{-1/p}\frac{\pr}{\pr\tau}(\phi\eta)\\
	=&\tilde{\phi}^{-l/p}\tilde{\eta}^{-1/p}\tilde{\phi}^{l}(l\tilde{\eta}'\tilde{\eta}^{l-1})\\
	=&l\tilde{\phi}^{l/p'}\tilde{\eta}^{l/p'-1}\tilde{\eta}'\label{32},\\
	&\phi^{-1/p}\eta^{-1/p}\Delta(\phi\eta)\\
	=&\tilde{\phi}^{-l/p}\tilde{\eta}^{-1/p}(l(l-1)|\nabla\tilde{\phi}|^2\tilde{\phi}^{l-2}+l(\Delta\tilde{\phi})\tilde{\phi}^{l-1})\tilde{\eta}^{l}\\
	=&\tilde{\eta}^{l(1-1/p)}(l(l-1)|\nabla\tilde{\phi}|^2\tilde{\phi}^{l(1-1/p)-2}+l(\Delta\tilde{\phi})\tilde{\phi}^{l(1-1/p)-1})\\
	=&\tilde{\eta}^{l/p'}(l(l-1)|\nabla\tilde{\phi}|^2\tilde{\phi}^{l/p'-2}+l(\Delta\tilde{\phi})\tilde{\phi}^{l/p'-1})\label{36}
	\end{align}
	$\eqref{24}=|\eqref{29}+\eqref{32}+\eqref{36}|^{p'}$ and, the values appearing in the powers of \eqref{29}, \eqref{32}, and \eqref{36} are all positive (note that $l>2p'$). So \eqref{24} is bounded, and we get \eqref{lem}.
	
	Next, we will show \eqref{lem2} to use \eqref{lem}. Applying Young's inequality, the second term in \eqref{lem} is bounded by $CR^{n+2-p'}+I(R)/p$.
	\end{proof}
	\begin{proof}[\textbf{Proof of Theorem~\ref{b-u1}}]
	First, we will show that there exists a function in $(\Hg\cap H^s)\times(\Hg\cap L^2)$ which satisfies the condition \eqref{inHg}. It is sufficient to show the next lemma.
	\begin{lem}\label{inHglem}With the conditions of Theorem~\ref{b-u1}, we have
	\begin{align}
	\mathcal{F}^{-1}\left(|\xi|^{-n/2+\ga}(\log(|\xi|^{-1}))^{-1}\chi_{|\cdot|<r}(\xi)\right)\in L^2\times\Hg.
	\end{align}
	\end{lem}
	\begin{proof}[\textbf{Proof of Lemma~\ref{inHglem}}]
	 It is sufficient to show that
	\begin{align}
	f(\xi):=|\xi|^{-n/2+\ga}(\log(|\xi|^{-1}))^{-1}\chi_{|\cdot|<r}(\xi)\in L^2
	\end{align}
	and
	\begin{align}
	g(\xi):=|\xi|^{-n/2}(\log(|\xi|^{-1}))^{-1}\chi_{|\cdot|<r}(\xi)\in L^2.\label{f(xi)}
	\end{align}
	To show them, it is sufficient to show only $g\in L^2$. We have
	\begin{align}
	&\int_{\bR^n}|g(\xi)|^2d\xi\\
	=&\int_{\bR^n}|\xi|^{-n}(\log(|\xi|))^{-2}\chi_{|\cdot|<1}(\xi)d\xi\\
	=&\int_{|\xi|<r}|\xi|^{-n}(\log(|\xi|))^{-2}d\xi.\label{xi=romega}
	\end{align}
	by change of variable $\xi=e^{l}\omega(-\infty<l<0,\omega/r\in S^{n-1})$, we get
	\begin{align}
	\eqref{xi=romega}&=\iint_{l<0,\omega/r\in S^{n-1}}e^{-nl}l^{-2}e^{nl}dld\omega\\
	&\simeq\int_{-\infty}^0l^{-2}dl<\infty.
	\end{align}
	\end{proof}
	With the conditions of initial data,
	\begin{align}
	&\int_{\bR^n}(u_0+u_1)\phi_R(x)dx\\
	=&\int_{\bR^n}\what{u_0+u_1}\check{\phi_R}(\xi)d\xi\\
	=&\int_{\bR^n}\what{u_0+u_1}(\xi)R^{n}\check{\phi}(R\xi)d\xi\\
	=&\int_{\bR^n}\what{u_0+u_1}(R^{-1}\xi)\check{\phi}(\xi)d\xi\\
	\gtr&\int_{|\xi|\le rR}R^{n/2-\ga}|\xi|^{-n/2+\ga}(\log({R|\xi|^{-1}}))^{-1}\check{\phi}(\xi)d\xi.\label{ini2}
	\end{align}Here, the term $(\log({R|\xi|^{-1}}))^{-1}$ satisfies (note $|\xi|\le rR$)
	\begin{align}
	(\log({R|\xi|^{-1}}))^{-1}&=\left(\log R+\log(|\xi|^{-1})\right)^{-1}\\
	&=\left(\log R\left(1+\frac{\log(|\xi|^{-1})}{\log R}\right)\right)^{-1}\\
	&\ge \left(\log R\left(1+\frac{\log(|\xi|^{-1})}{\log R}\right)\right)^{-1}\\
	&\ge \left(\log R\left(1+\frac{\log(|\xi|^{-1})}{\log( r^{-1}|\xi|)}\right)\right)^{-1}\\
	&= \left(\log R\right)^{-1}\left(\frac{\log( r^{-1}|\xi|)+\log(|\xi|^{-1})}{\log( r^{-1}|\xi|)}\right)^{-1}\\
	&= \left(\log R\right)^{-1}\left(\frac{\log( r^{-1})}{\log( r^{-1}|\xi|)}\right)^{-1}\\
	&= \left(\log R\right)^{-1}\frac{\log( r^{-1}|\xi|)}{\log( r^{-1})}.
	\end{align}
	Therefore, we get 
	\begin{align}
	&\int_{\bR^n}(u_0+u_1)\phi_R(x)dx\gtr R^{n/2-\ga}(\log R)^{-1}
	\end{align}and
	\begin{align}
	I(R)\les-\eps R^{n/2-\ga}(\log R)^{-1}+CR^{n+2-2p'}.
	\end{align}Since $I(R)\ge0$, we get $-\eps R^{n/2-\ga}(\log R)^{-1}+CR^{n+2-2p'}\ge0$ and
	\begin{align}
	\eps\les R^{n/2+2-2p'+\ga}(\log R)\le R^{n/2+\ga+2-2p'+\delta}\stackrel{R\to \sqrt{T}}{\longrightarrow}T^{\frac{n}{4}+\frac{\ga}{2}+1-p'+\delta}.
	\end{align}So, we get $T\les \eps^{-\frac{1}{p'-(1+n/4+\ga/2)}}$.
	\end{proof}
	\begin{proof}[\textbf{Proof of Theorem~\ref{b-u2}}]
	First we note
	\begin{align}
	&\int_{\bR^n}(u_0+u_1)\phi_Rdx\\
	=&\int_{\bR^n}\what{u_0+u_1}\check{\phi_R}(\xi)d\xi.\label{ini>0}
	\end{align}
	Since $\check{\phi_R}(0)>0$ (because of \ref{iii} and the fact that $\phi_R$ is a positive non-trivial function with compact support ) and $\hat{u_0}, \hat{u_1}>0$, we get $\eqref{ini>0}>0$. For \eqref{lem} and $I(R)>0$, we get
	\begin{align}
	0&\les -\eps\int_{\bR^n}(u_0+u_1)\phi_Rdx+CI(R)^{1/p}R^{\frac{n+2}{p'}-2}
	\end{align}
	and
	\begin{align}
	I(R)\ge\eps^{p}\left(\int_{\bR^n}(u_0+u_1)\phi_RdxR^{\frac{n+2}{p'}-2}\right)^p(\eqqcolon \eps^pg(R)).
	\end{align}
	Because of the definition of $I(R)$, $I$ is a monotonically non-decreasing function. Therefore, we get
	\begin{align}
	\eps^pg(1)\les I(1)\le I(R)&\les -\eps\int_{\bR^n}(u_0+u_1)\phi_Rdx+CR^{n+2-2p'}\\
	&\les R^{n+2-2p'}\stackrel{R\to \sqrt{T}}{\longrightarrow} T^{n/2+1-p'}.
	\end{align}So, we get $T\les \eps^{-\frac{p}{p'-(1+n/2)}}$.
	\end{proof}
	\begin{proof}[\textbf{Proof of Theorem~\ref{b-u3}}]
	First, we will show that there exists a function in $(\Hg\cap H^s)\times(\Hg\cap L^2)$ which satisfies the condition \eqref{inHg2}. It is sufficient to show the next lemma.
	\begin{lem}\label{inHglem2}With the conditions of Theorem~\ref{b-u3}, we have
	\begin{align}
	\mathcal{F}^{-1}\left(|\xi-a|^{-n/2}(\log(|\xi-a|))^{-1}\chi_{|\cdot|<1}(\xi-a)\right)\in L^2\times\Hg.
	\end{align}
	\end{lem}
	\begin{proof}[\textbf{Proof of Lemma \ref{inHglem2}}]
	 It is sufficient to show that
	\begin{align}
	\tilde{f}(\xi):=|\xi-a|^{-n/2}(\log(|\xi-a|))^{-1}\chi_{|\cdot|<r}(\xi-a)\in L^2\label{f(xi)2}
	\end{align}
	and
	\begin{align}
	\tilde{g}(\xi):=|\xi|^{-\ga}|\xi-a|^{-n/2}(\log(|\xi-a|))^{-1}\chi_{|\cdot|<r}(\xi-a)\in L^2.\label{g(xi)2}
	\end{align}
	\eqref{f(xi)2} is obvious form \eqref{f(xi)} by considering $\tilde{f}(\xi+a)=g(\xi)$. Regarding \eqref{g(xi)2}, Since $0\notin\supp\tilde{g}$ (Take $r$ in such a way that it is sufficiently small.), then $|\xi^{-\ga}|$  is bounded. Therefore $\tilde{g}\in L^2$.
	\end{proof}
	First, we define the function $J$ following
	\begin{align}
	J(R)\coq\iint_{\bR^n\times[0,T)}|u(x,t)|^p(2+\cos(a\cdot x))\phi_R(x)\eta_{1/2}(t)dxdt.\label{J}
	\end{align} 
	The proof of this theorem uses the following lemma, which expresses the relationship between $J$ and $I$.
	\begin{lem}\label{lemin}
The functions $I(R),\ J(R)$ satisfy
	\begin{align}
	J(R)\les -\eps R^{n/2}(\log R)^{-1}+CI(R)^{1/p}R^{n/p'}\label{lem3}
	\end{align}
	for all $R\in[1,\sqrt{T})$ with a constantant $C>0$ .
\end{lem}
\begin{proof}[\textbf{Proof of Lemma \ref{lemin}}]
First, since $u$ is weak solution of the Cauchy problem \eqref{bdw}, we have
	\begin{align}
	&J(R)\\
	=&-\eps\int_{\bR^n}(u_0+u_1)(2+\cos(a\cdot x))\phi_R(x)dx\label{term1}\\
	&+\iint_{\bR^n\times[0,1)}u\left(\prtt-\prt-\Delta\right)((2+\cos(a\cdot x))\phi_R\eta_{1/2})dxdt\label{27}.
	\end{align}
	We further estimate the term \eqref{27} as
	\begin{align}
	&\iint_{\bR^n\times[0,T)}u\left(\prtt-\prt-\Delta\right)((2+\cos(a\cdot x))\phi_R\eta_{1/2})dxdt\\
	=&\iint_{\supp\phi_R\eta_R}\left(|u|^p\phi_R\eta_R\right)^{1/p}\phi_R^{-1/p}\eta_{R}^{-1/p}\left(\prtt-\prt-\Delta\right)((2+\cos(a\cdot x))\phi_R\eta_{1/2})dxdt\\
	\le&I(R)^{1/p}\left(\iint_{\supp\phi_R\eta_{1/2}}\phi_R^{-p'/p}\eta_R^{-p'/p}\left|\left(\prtt-\prt-\Delta\right)((2+\cos(a\cdot x))\phi_R\eta_{1/2})\right|^{p'}dxdt\right)^{1/p'}.\label{etain}
	\end{align}
Considering the values of the functions $\eta_{1/2}$ and $\eta_R$, we can see that $\eta_R=1$ with $t\in\supp\eta_{1/2}$. So, \eqref{etain} is
	\begin{align}
	&I(R)^{1/p}\left(\iint_{\supp\phi_R\eta_{1/2}}\phi_R^{-p'/p}\left|\left(\prtt-\prt-\Delta\right)((2+\cos(a\cdot x))\phi_R\eta_{1/2})\right|^{p'}dxdt\right)^{1/p'}.\label{etain2}
\end{align}
This leads us to examine $(2+\cos(a\cdot x))\phi_R$. Calculation yields
\begin{align}
&|\Delta((2+\cos(a\cdot x))\phi_R(x))|\\
=&|\Delta(2+\cos(a\cdot x))\phi_R(x)+2\nabla(2+\cos(a\cdot x))\cdot\nabla\phi_R(x)+(2+\cos(a\cdot x))\Delta\phi_R|\\
\le&|a|^2|\cos(a\cdot x)|\phi_R(x)+R^{-1}|\sin(a\cdot x)a||(\nabla\phi)(R^{-1}x)|+R^{-2}(2+\cos(a\cdot x))(\Delta\phi)(R^{-1}x)\\
\les&\phi(R^{-1}x)+|(\nabla\phi)(R^{-1}x)|+|(\Delta\phi)(R^{-1}x)|.
\end{align}
There, we define $g:=\phi+|\nabla\phi|+|\Delta\phi|$.  And, we can get $(2+\cos(a\cdot x))\phi_R\les \phi_R(x)$. Substituting these results into the integral in \eqref{etain2}, we obtain
\begin{align}
&\iint_{\supp\phi_R\eta_{1/2}}\phi_R^{-p'/p}\left|\left(\prtt-\prt-\Delta\right)((2+\cos(a\cdot x))\phi_R\eta_{1/2})\right|^{p'}dxdt\\
\les&\iint_{\supp\phi_R\eta_{1/2}}\phi_R|\eta''_{1/2}-\eta'_{1/2}|^{p'}dxdt\\
&+\iint_{\supp\phi_R\eta_{1/2}}\eta_{1/2}^{p'}\left(\phi_R+\phi^{-1/p}|(\nabla\phi)(R^{-1}x)|+\phi^{-1/p}|(\Delta\phi)(R^{-1}x)|\right)^{p'}dxdt\\
\les&R^{n}\iint_{\supp\phi\eta_{1/2}}\phi|\eta''_{1/2}-\eta'_{1/2}|^{p'}dxdt\\
&+R^n\iint_{\supp\phi\eta_{1/2}}\eta_{1/2}^{p'}\left(\phi+\phi^{-1/p}|\nabla\phi|+\phi^{-1/p}|\Delta\phi|\right)^{p'}dxdt.\label{Integral Check}
\end{align}

	Here we verify that the integral of \eqref{Integral Check} is finite. Since this integral domain is bounded, it is sufficient to show that \eqref{Integral Check} is bounded. To do this, consider $\phi=\tilde{\phi}^l,\ \eta=\tilde{\eta}^l$, and express \eqref{Integral Check} in terms of $\tilde{\phi}$ instead of $\phi$. We will express $\eta',\eta'',\Delta\phi$ as $\tilde{\eta},\tilde{\phi}$.
	\begin{align}
	&\nabla\phi=\nabla(\tilde{\phi})=l\tilde{\phi}^{l-1}\nabla\tilde{\phi}\\
	&\Delta\phi=\Delta(\tilde{\phi}^l)=l(l-1)|\nabla\tilde{\phi}|^2\tilde{\phi}^{l-2}+l(\Delta\tilde{\phi})\tilde{\phi}^{l-1}.
	\end{align}And, we get
	\begin{align}
	\phi^{-1/p}|\nabla\phi|&=l\tilde{\phi}^{-l/p+l-1}|\nabla\phi|=l\tilde{\phi}^{l/p'-1}|\nabla\phi|\label{29}\\
	\phi^{-1/p}|\Delta\phi|&=l(l-1)|\nabla\tilde{\phi}|^2\tilde{\phi}^{-l/p+l-2}+l(\Delta\tilde{\phi})\tilde{\phi}^{-l/p+l-1}\\
	&=l(l-1)|\nabla\tilde{\phi}|^2\tilde{\phi}^{l/p'-2}+l(\Delta\tilde{\phi})\tilde{\phi}^{l/p'-1}.\label{32}
	\end{align}
	The values appearing in the powers of \eqref{29} and \eqref{32} are all positive (note that $l>2p'$). So \eqref{Integral Check} is bounded. So, we get $\eqref{lemin}\les I(R)^{1/p}R^{n/p'}$.
	
	Next, we will check the estimate $\eqref{term1}\les -\eps R^{n/2}(\log R)^{-1}$.
	Considering Parseval's identity, 
	\begin{align}
	&\int_{\bR^n}(u_0+u_1)(2+\cos(a\cdot x))\phi_R(x)dx\\
	=&\int_{\bR^n}\what{(u_0+u_1)}(2+\cos(a\cdot x))\check{\phi_R}d\xi\\
	=&\int_{\bR^n}\what{(u_0+u_1)}(\xi-a)\check{\phi_R}d\xi+2\int_{\bR^n}\what{(u_0+u_1)}(\xi)\check{\phi_R}d\xi+\int_{\bR^n}\what{(u_0+u_1)}(\xi+a)\check{\phi_R}d\xi\\
	\ge&\int_{\bR^n}\what{(u_0+u_1)}(\xi+a)\check{\phi_R}d\xi\\
	\gtr&\int_{|\xi|<r}|\xi|^{-2/n}(\log(|\xi|^{-1}))^{-1}\check{\phi_R}d\xi\\
	=&\int_{|\xi|<rR}R^{n/2}|\xi|^{-n/2}(\log(R|\xi|^{-1}))^{-1}\check\phi(\xi)d\xi.\label{ini3}
	\end{align}
	The value \eqref{ini3} is equivalent to the value \eqref{ini2} with the case $\ga=0$. Therefore, by making the same argument as in Theorem\ref{b-u1}, we get $\eqref{term1}\les -\eps R^{n/2}(\log R)^{-1}$.
	\end{proof}
	
	By \eqref{lem2} and \eqref{lem3}, we can get
	\begin{align}
	0\le J(R)\les&-\eps R^{n/2}(\log R)^{-1}+CI(R)^{1/p}R^{n/p'}\\
	\le&-\eps R^{n/2}(\log R)^{-1}+C\left(-\eps\int_{\bR^n}(u_0+u_1)\phi_Rdx+CR^{n+2-2p'}\right)^{1/p}R^{n/p'}.
	\end{align}
	Here, Considering
	\begin{align}
	\int_{\bR^n}(u_0+u_1)\phi_Rdx=\int_{\bR^n}\what{(u_0+u_1)}\check\phi_Rdx>0\ \left(\text{by \eqref{ii} and $\what{(u_0+u_1)}>0$}\right),
	\end{align}
	we obtain
	\begin{align}
	0&\le-\eps R^{n/2}(\log R)^{-1}+CR^{(n+2-2p')/p+n/p'}\\
	\eps&\les R^{(2-2p')/p+n/2}\log R\le R^{(2-2p')/p+n/2+\tilde{\delta}}
	\end{align}
	for any $0<\tilde{\delta}\ll1$. By taking the limit $R\to \sqrt{T}$ and appropriately selecting $\tilde{\delta}$ in response to $\delta$, we get the lifespan 
	estimate \eqref{b-u3l}.
	\end{proof}

\section*{Acknowledgment}
I am deeply grateful to my supervisor, Prof. Mitsuru Sugimoto (Nagoya University), for his guidance throughout this work. I also thank Prof. Yuta Wakasugi (Hiroshima University) for helpful suggestions and for directing me to key references related to this work. I am grateful to Prof. Soichiro Suzuki (Chuo University) for providing the initial idea of  condition \eqref{inHg2}.
	

Graduate School of  Mathematics, Nagoya University, Furocho, Chikusaku, Nagoya 464-8602, Japan

Email: mitsuhiro.matsunaga.e6@math.nagoya-u.ac.jp
\end{document}